\documentclass{elsarticle}
\usepackage{graphicx} 
\usepackage{natbib}
\usepackage{graphicx,amsmath,amsthm,amsfonts,color,amssymb,comment}
\usepackage{tikz}
\usetikzlibrary {arrows.meta}
\usepackage{subfiles} 
\usepackage[nameinlink]{cleveref} 
\usepackage{kpfonts}
\usepackage{etoolbox}
\usepackage{thmtools}
\usepackage{environ}
\usepackage{float}

\def\bfm#1{\boldsymbol{#1}} 

\newtheorem{theorem}{Theorem}[section]

\theoremstyle{definition}

\begin{document}

\begin{frontmatter}

\title{\textbf{A Quadratic \(G^1\) Spline Approximation of the Sphere over Uniform Polyhedra}}
\cortext[corauth]{Corresponding author}
\author[address1]{Ema \v{C}e\v{s}ek\corref{corauth}}
\ead{ema.cesek@fmf.uni-lj.si}

\author[address1,address2]{Ale\v{s} Vavpeti\v{c}}
\ead{ales.vavpetic@fmf.uni-lj.si}

\address[address1]{University of Ljubljana, Faculty of Mathematics and Physics,
Jadranska 19, Ljubljana, Slovenia}
\address[address2]{Institute of Mathematics, Physics and Mechanics, Jadranska 19, Ljubljana, Slovenia}
\date{}

\begin{abstract}
In this paper, we study geometrically continuous quadratic splines over triangulations. While a rich variety of $C^1$ quadratic splines is available over planar domains, and such splines can also be constructed on the torus, the problem becomes significantly more challenging on more general surfaces.

We first construct a $G^1$ spline over a regular spherical $n$-gon, subdivided into $3n$ triangles. Based on this construction, we obtain a quadratic $G^1$ spline approximation of the sphere induced by an arbitrary uniform polyhedron, where each $n$-gonal face is subdivided into $3n$ triangles. The construction uses only quadratic triangular patches and yields explicit control points depending on the geometry of the underlying polyhedron and one free parameter. We also analyze the resulting approximation quality and curvature behavior, and illustrate the construction on Platonic and Archimedean solids.
\end{abstract}

\begin{keyword}
geometric interpolation\sep geometric continuity\sep triangular parametric polynomial patch\sep sphere approximation\sep uniform solids    
\MSC[2020] 65D05 \sep  65D07 \sep 65D17
\end{keyword}

\end{frontmatter}

\section{Introduction}
The sphere is one of the fundamental geometric objects in computer-aided geometric design, geometric modeling, and approximation theory. It is well known that the sphere admits rational parametric representations~\cite{Schicho-98}, whereas no exact polynomial parametric representation exists. Since rational parameterizations can be difficult to handle in practical applications, polynomial approximations of the sphere remain an important topic in geometric modeling. It is sufficient to consider the unit sphere, since any other sphere can be obtained from it by translation and scaling, without affecting the approximation quality.

A natural strategy for approximating the sphere is to construct parametric polynomial spline surfaces based on polynomial approximations of spherical triangles forming a triangulation of the sphere or of a portion thereof. Such spline constructions should exhibit not only continuity but also an appropriate degree of smoothness. In geometric modeling, geometric continuity is often sufficient, since the resulting surfaces are primarily intended for visualization and shape design. Consequently, \(G^k\)-continuous splines~\cite{FarinHoschekKim-02-Handbook,Kiciak-2017} are particularly attractive. Compared with \(C^k\)-continuous splines, they offer additional degrees of freedom that can be exploited either to improve approximation quality or to enhance shape control.

The construction of low-degree geometrically continuous spline surfaces has a long history and remains an active area of research. Classical works on geometric continuity and smooth patch constructions include~\cite{Gregory-1989,Farin-2002,Peters-Reif-2008}. Multi-patch surface geometries represented by $G^1$ smooth parameterizations are also of central importance in isogeometric analysis, where $C^1$-continuous spline functions defined over such domains are required for the discretization of higher-order partial differential equations. Low-degree $G^1$ representations of surface domains are particularly attractive, as they give rise to low-degree gluing functions, which provide more degrees of freedom and lead to higher-dimensional isogeometric spline spaces with better approximation properties. Recent work~\cite{GroseljKaplKnezTakacsVitrih2024} has shown that even quadratic parameterizations of mesh elements admit nontrivial $C^1$ smooth spline constructions, in contrast to linear parameterizations. For $G^1$-smooth surface domains, however, quadratic parameterizations constitute the lowest-degree case for which such constructions are possible. Although rich classes of smooth spline constructions over planar triangulations are known~\cite{Lai-Schumaker-2007}, the situation for more general surfaces is considerably more restrictive. In particular, low-degree polynomial patches impose severe geometric constraints on $G^1$ continuity. This phenomenon was analyzed in detail in~\cite{Karciauskas-Peters-2025}, where the authors studied which smooth surfaces can be constructed from splines of total degree~2.

Early work on the geometric interpolation of general surfaces can be found in~\cite{Morken-2005-parametric-surface}, while specific interpolation schemes were studied in~\cite{Jaklic-Kozak-Krajnc-Vitrih-Zagar-2006-patch,Jaklic-Kozak-Krajnc-Vitrih-Zagar-Geometric-08}. More recently, a novel approach to geometric interpolation by parametric polynomials was introduced in~\cite{Jaklic-Kanduc-2017}. A \(G^1\) approximation of the unit sphere by biquadratic polynomial spline patches over a cube was constructed in~\cite{VAVPETIC2023128196}. In contrast, it was shown in~\cite{VAVPETIC2022102061} that no \(G^1\) approximation of the unit sphere exists using quadratic polynomial spline patches over triangulations induced by a tetrahedron, octahedron, or icosahedron inscribed in the unit sphere.

Approximations based on cubic and quartic triangular patches have also been proposed. In particular, it was shown in~\cite{Ahn-26} that the best cubic approximants of equilateral spherical triangles are LN surfaces. Since every quadratic triangular patch is also an LN surface~\cite{Peternell-Odehnal-2008}, quadratic patches constitute a natural low-degree framework for approximating spherical regions.

The use of Platonic solids as underlying polyhedra for smooth spline constructions dates back to~\cite{Peters-97}, where families of symmetric polynomial spheroids based on Platonic solids and their duals were introduced. The resulting constructions employ either cubic triangular patches or biquadratic quadrilateral patches.

In this paper, we further reduce the polynomial degree by constructing $G^1$ spline approximations of the sphere entirely from quadratic triangular patches. Besides their relevance in geometric modelling, such constructions may also serve as geometric domains supporting low-degree smooth spline spaces in isogeometric analysis.

The paper is organized as follows. In~\Cref{sec:Preliminaries}, we present a theorem characterizing symmetric \(G^1\) splines consisting of two triangular patches. This theorem is then used in~\Cref{sec:SphericalCap}, where a \(G^1\) spline over a regular spherical polygon is constructed. We also show that such splines over spherical polygons cannot be used directly to construct a \(G^1\) spline over the entire sphere. In~\Cref{sec:Triangulation}, we construct a \(G^1\) spline over a suitable triangulation of a regular spherical polygon. This construction can be extended to a \(G^1\) spline over the whole sphere whose underlying mesh is obtained from an arbitrary uniform polyhedron. In~\Cref{sec:Generalisation}, we discuss possible generalisations. The paper concludes with~\Cref{sec:Conclusion}.

\section{Preliminaries}\label{sec:Preliminaries}
The main goal of this paper is to construct a \(G^1\)-continuous approximation of the sphere using parametric polynomial splines composed of quadratic triangular patches. Without loss of generality, it suffices to consider the unit sphere \(\mathcal{S}\) centered at the origin, since any other sphere can be obtained from \(\mathcal{S}\) by translation and scaling. Let \(\mathcal{P}\) be a convex uniform polyhedron inscribed in \(\mathcal{S}\). Its faces are regular \(n\)-gons, whose radial projections onto \(\mathcal{S}\) form a tiling of spherical regular \(n\)-gons. To approximate \(\mathcal{S}\), it is therefore sufficient to construct an approximation of each spherical regular \(n\)-gon.

A key advantage of our approach is its locality: the approximation over a given face of \(\mathcal{P}\) can be constructed independently of the other faces. In the next section, we introduce a triangulation of the faces of \(\mathcal{P}\) into isosceles triangles. The problem is thereby reduced to approximating spherical isosceles triangles obtained by radially projecting these triangles onto \(\mathcal{S}\). The approximation is represented by quadratic triangular Bézier patches defined over the resulting subtriangles. To ensure a visually smooth surface, \(G^1\) continuity conditions are imposed across the boundaries of adjacent patches. 

Let \(\Delta\) be the standard 2-simplex defined by \(\Delta = \{ (\xi,\eta)\in \mathbb{R}^2 \mid \xi,\eta\geq 0,\ \xi+\eta \leq 1 \}\). Let \(\bfm{p},\bfm{q} \colon \Delta \to \mathbb{R}^3\) be quadratic triangular Bézier patches parameterized by
\begin{align*}
\bfm{p}(\xi,\eta) =  \sum_{i+j+k=2} B_{i,j,k}(\xi,\eta) \bfm{P}_{i,j,k},
\quad\text{ and }\quad
\bfm{q}(\xi,\eta) =  \sum_{i+j+k=2} B_{i,j,k}(\xi,\eta) \bfm{Q}_{i,j,k}, \quad i, j, k \in \mathbb{Z}_{+}, 
\end{align*}
where
\[
B_{i,j,k}(\xi,\eta) = \frac{2}{i!\,j!\,k!}\,\xi^i \eta^j (1-\xi-\eta)^k,
\]
are the Bernstein polynomials of degree \(2\), and \(\bfm{P}_{i,j,k}, \bfm{Q}_{i,j,k} \in \mathbb{R}^3\) are the corresponding control points. We assume that the patches \(\bfm{p}\) and \(\bfm{q}\) share a common boundary curve along \(\eta=0\), i.e., \(\bfm{p}(\xi,0)=\bfm{q}(\xi,0)\). This implies that \(\bfm{P}_{i,0,2-i}=\bfm{Q}_{i,0,2-i}\), \(i=0,1,2\).

Two patches with a common boundary curve are $G^1$ continuous if their tangent plane varies continuously along that common boundary curve. Equivalently, the vectors $\frac{\partial{\bfm{p}}}{\partial{\xi}}(\xi,0)=\frac{\partial{\bfm{q}}}{\partial{\xi}}(\xi,0)$, $\frac{\partial{\bfm{p}}}{\partial{\eta}}(\xi,0)$, and $\frac{\partial{\bfm{q}}}{\partial{\eta}}(\xi,0)$ must be coplanar for every \(\xi\in[0,1]\) (see~\cite[(3.3) on page 41]{Kiciak-2017}). If the two patches are mirror images of each other with respect to the plane containing their common boundary curve, then the \(G^1\) continuity conditions admit a particularly simple form, as described in the theorem below. The equivalence of the first and third conditions is stated as an exercise in~\cite[page 53]{Kiciak-2017}. 

Before stating the theorem, we introduce the following notation. Let us define $u_i =\bfm{P}_{i+1,0,1-i} - \bfm{P}_{i,0,2-i}$, where $i=0,1$, $v_{jk}=\bfm{P}_{j,k+1,1-j-k} - \bfm{P}_{j,k,2-j-k}$ and $w_{jk} = \bfm{Q}_{j,k+1,1-j-k} - \bfm{Q}_{j,k,2-j-k}$, where $j+k = 0,1$ (see~\Cref{fig:vectors}). 
\begin{figure}[h!]
\centering
\begin{tikzpicture}[scale=0.8]
\coordinate (p00) at (0,0,0);
\coordinate (p10) at (0,3.6,1.2);
\coordinate (p20) at (0,5,-1);

\coordinate (q01) at (3,0.5,0);
\coordinate (p01) at (-3,0.5,0);

\coordinate (q11) at (2.5,5,0);
\coordinate (p11) at (-2.5,5,0);

\coordinate (q02) at (5,2.5,0);
\coordinate (p02) at (-5,2.5,0);

\draw[line width=0.01mm,-{LaTeX[width'=0pt .5, length=10pt]}] (p00)-- node[left]{$u_0$} (p10);
\draw[line width=0.01mm,-{LaTeX[width'=0pt .5, length=10pt]}] (p10) --node[left]{$u_1$} (p20);

\draw[line width=0.01mm,-{LaTeX[width'=0pt .5, length=10pt]}] (p00)-- node[above]{$v_{00}$} (p01);
\draw[line width=0.01mm,-{LaTeX[width'=0pt .5, length=10pt]}]  (p10)-- node[below, xshift = -6pt]{$v_{10}$} (p11);
\draw[line width=0.01mm,-{LaTeX[width'=0pt .5, length=10pt]}] (p00)-- node[above]{$w_{00}$} (q01);
\draw[line width=0.01mm,-{LaTeX[width'=0pt .5, length=10pt]}]  (p10)-- node[below, xshift = 6pt]{$w_{10}$} (q11);
\draw[line width=0.01mm,-{LaTeX[width'=0pt .5, length=10pt]}] (p01)-- node[above, xshift= 4pt]{$v_{01}$} (p02);
\draw[line width=0.01mm,-{LaTeX[width'=0pt .5, length=10pt]}] (q01)-- node[above, xshift= -5pt]{$w_{01}$} (q02);

\draw (p00) .. controls (p10) .. (p20);
\draw (p00) .. controls (p01) .. (p02);
\draw (p00) .. controls (q01) .. (q02);
\draw (p20) .. controls (p11) .. (p02);
\draw (p20) .. controls (q11) .. (q02);

\draw[line width=0.01mm] (p20)--(p11)--(p02); 
\draw[line width=0.01mm] (p20)--(q11)--(q02); 

\fill (q11) circle (0.05) node [above] {$\bfm{Q}_{1,1,0}$};
\fill (q01) circle (0.05) node [below, yshift=-2pt] {$\bfm{Q}_{0,1,1}$};
\fill (q02) circle (0.05) node [right] {$\bfm{Q}_{0,2,0}$};
\fill (p11) circle (0.05) node [above] {$\bfm{P}_{1,1,0}$};
\fill (p01) circle (0.05) node [below] {$\bfm{P}_{0,1,1}$};
\fill (p02) circle (0.05) node [left] {$\bfm{P}_{0,2,0}$};
\fill (p00) circle (0.05) node [below] {$\bfm{P}_{0,0,2}= \bfm{Q}_{0,0,2}$};
\fill (p10) circle (0.05) node [anchor=north west, yshift= 5pt] {$\bfm{P}_{1,0,1}= \bfm{Q}_{1,0,1}$};
\fill (p20) circle (0.05) node [anchor=north, yshift = 15pt ] {$\bfm{P}_{2,0,0}= \bfm{Q}_{2,0,0}$};
\end{tikzpicture}
\caption{Patches $\bfm{p}$ and $\bfm{q}$ with control points $\bfm{P}_{i,j,2-i-j}$ and $\bfm{Q}_{i,j,2-i-j}$, where $i,j=0,1,2$, and the corresponding vectors between the control points.} 
\label{fig:vectors}
\end{figure}
We assume that the control points of the common boundary curve are not collinear and therefore determine a plane $\Pi\subset \mathbb{R}^3$. Let \(\mathcal{R}_{\Pi}\) denote the reflection in the plane \(\Pi\), and let \(\mathcal{P}_{\Pi}\) denote the orthogonal projection onto \(\Pi\). We consider the case where $\bfm{Q}_{i,j,2-i-j} = \mathcal{R}_{\Pi}(\bfm{P}_{i,j,2-i-j})$ for $i+j = 0,1,2$. To avoid unwanted intersections between the patches, we assume that the control points $\bfm{P}_{i,j,2-i-j}$ lie on the same closed half-space determined by $\Pi$, and that only the three control points $\bfm{P}_{i,0,2-i}$, $i=0,1,2$, lie in \(\Pi\). By construction, the plane $\Pi$ is spanned by the vectors $u_0$ and $u_1$, hence the vectors $u_0$, $u_1$, $u_0 \times u_1$ form a basis of $\mathbb{R}^3$.

\begin{theorem}\label{thm:characterisation}
Suppose that the points \(\bfm{P}_{i,0,2-i}\), \(i=0,1,2\), are non-collinear and lie in a plane \(\Pi\). Assume further that the remaining control points \(\bfm{P}_{i,j,2-i-j}\), \(i+j=1,2\) and \(j>0\), lie in the same open half-space determined by \(\Pi\). Let \(\bfm{p}\) be the quadratic Bézier patch defined by the control points \(\bfm{P}_{i,j,2-i-j}\), and let \(\bfm{q}\) be the quadratic Bézier patch defined by the control points \(\bfm{Q}_{i,j,2-i-j}=\mathcal{R}_{\Pi}\bigl(\bfm{P}_{i,j,2-i-j}\bigr)\), obtained by reflecting \(\bfm{P}_{i,j,2-i-j}\) across the plane \(\Pi\). Let \(u_i\), \(v_{jk}\), and \(w_{jk}\) be the vectors defined above. Then the following conditions are equivalent:
\begin{enumerate}
\item The spline formed by \(\bfm{p}\) and \(\bfm{q}\) is \(G^1\) continuous.
\item There exists \(\alpha \in \mathbb{R}\) such that $\mathcal{P}_{\Pi}(v_{00})=\alpha u_0$ and $\mathcal{P}_{\Pi}(v_{10})=\alpha u_1$.
\item The spline formed by \(\bfm{p}\) and \(\bfm{q}\) is \(G^2\) continuous.
\item For every \(\xi\in[0,1]\), the normal vector to the tangent plane at \(\bfm{p}(\xi,0)\) is $\tfrac{\partial}{\partial\xi}\bfm{p}(\xi,0)\times (u_0\times u_1)$.
\end{enumerate}
\end{theorem}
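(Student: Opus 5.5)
The plan is to reduce everything to a statement about the tangent vectors along the common boundary $\eta=0$. We split each vector into its component in $\Pi$ and its component along $n=u_0\times u_1$. Along the boundary,
\[
\tfrac{\partial}{\partial\xi}\bfm{p}(\xi,0)=2t(\xi),\quad t(\xi)=(1-\xi)u_0+\xi u_1,\qquad
\tfrac{\partial}{\partial\eta}\bfm{p}(\xi,0)=2\bigl((1-\xi)v_{00}+\xi v_{10}\bigr)=2\bigl(a(\xi)+b(\xi)\bigr),
\]
where $a=\mathcal{P}_{\Pi}(\cdot)$ and $b$ is the normal part. Since $\bfm{Q}=\mathcal{R}_{\Pi}(\bfm{P})$ and the boundary points are fixed by the reflection, we have $w_{jk}=\mathcal{R}_\Pi(v_{jk})$. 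Hence $\tfrac{\partial}{\partial\eta}\bfm{q}(\xi,0)=2(a-b)$ and $\tfrac{\partial}{\partial\xi}\bfm{q}(\xi,0)=2t$.

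The open half-space assumption gives $v_{00}\cdot n>0$ and $v_{10}\cdot n>0$. Therefore $b(\xi)\neq 0$ for all $\xi\in[0,1]$, being a convex combination of two nonzero normal vectors pointing to the same side. Also $t(\xi)\neq0$, because $u_0,u_1$ are independent.

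For (1)$\Leftrightarrow$(2) we compute $\det(t,a+b,a-b)=-2\det(t,a,b)$. Because $t,a\in\Pi$ and $b\parallel n$ with $b\ne0$, this vanishes exactly when $t\times a=0$. Writing $[x,y]$ for the scalar $(x\times y)\cdot n$ on $\Pi$, the $G^1$ condition becomes the vanishing of a quadratic polynomial in Bernstein form:
\[
(1-\xi)^2[u_0,\mathcal{P}_\Pi v_{00}]+\xi(1-\xi)\bigl([u_0,\mathcal{P}_\Pi v_{10}]+[u_1,\mathcal{P}_\Pi v_{00}]\bigr)+\xi^2[u_1,\mathcal{P}_\Pi v_{10}]\equiv0 .
\]
The end coefficients give $\mathcal{P}_\Pi v_{00}=\alpha u_0$ and $\mathcal{P}_\Pi v_{10}=\beta u_1$. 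The middle coefficient then reads $(\beta-\alpha)[u_0,u_1]=0$, so $\alpha=\beta$. The converse is immediate, since then $a=\alpha t$.

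For (1)$\Leftrightarrow$(4): under (2) the tangent plane of $\bfm p$ is spanned by $t$ and $b\parallel n$, so its normal is $t\times n$. This is also the normal of $\bfm q$, because $\mathcal{R}_\Pi$ fixes $t\times n\in\Pi$ and maps $b$ to $-b$. Conversely, if $t\times n$ is normal to $\bfm p$, then $a+b\perp t\times n$. Hence $a\in\operatorname{span}(t)$, i.e. $t\times a=0$, which is the $G^1$ condition above.

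The step I expect to be the main obstacle is (1)$\Rightarrow$(3); the reverse implication (3)$\Rightarrow$(1) is trivial. I would argue via second fundamental forms at a boundary point. By (4) both patches share the unit normal $N=t\times n/|t\times n|\in\Pi$, and $\mathcal{R}_\Pi$ maps $\bfm p$ onto $\bfm q$ while fixing $N$. It follows that $\mathrm{II}_{\bfm q}(\mathcal{R}_\Pi x,\mathcal{R}_\Pi y)=\mathrm{II}_{\bfm p}(x,y)$ on the tangent plane, whose basis $t,n$ satisfies $\mathcal{R}_\Pi t=t$ and $\mathcal{R}_\Pi n=-n$.
\begin{itemize}
\item The components $\mathrm{II}(t,t)$ and $\mathrm{II}(n,n)$ therefore agree on both sides.
\item The mixed term flips sign, $\mathrm{II}_{\bfm q}(t,n)=-\mathrm{II}_{\bfm p}(t,n)$. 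So it suffices to show $\mathrm{II}_{\bfm p}(t,n)=0$.
\item This holds because $\mathrm{II}_{\bfm p}(t,n)=-\,dN(t)\cdot n$, and $N$ stays in $\Pi$ along the boundary curve. Hence its derivative along $t$ lies in $\Pi$ and is orthogonal to $n$.
\end{itemize}
Thus the second fundamental forms coincide and the spline is $G^2$. Alternatively, one can reparametrize $\bfm p$ affinely by $(s,\eta)\mapsto\bfm p(s-\alpha\eta,\eta)$, so that the cross-boundary derivative is $2b\parallel n$, and then check the second-order matching conditions directly. I would use this only if a purely parametric definition of $G^2$ is required.
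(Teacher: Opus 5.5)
Your proof is correct, but it takes a different route from the paper's, most clearly for $G^2$.

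\textbf{The paper's route.} The paper works in coordinates with respect to $u_0,u_1,u_0\times u_1$ throughout.
\begin{enumerate}
\item It imposes $\tfrac{\partial\bfm q}{\partial\eta}=s_1\tfrac{\partial\bfm p}{\partial\eta}+t_1\tfrac{\partial\bfm p}{\partial\xi}$. It reads off $s_1=-1$ from the normal component, and $\alpha_1=\beta_0=0$, $\alpha_0=\beta_1$ from the in-plane components.
\item It proves (3) by exhibiting the second-order reparametrization functions $s_2,t_2$ as rational functions with denominator $(1-\xi)\gamma_0+\xi\gamma_1\neq0$.
\item It obtains (4) by expanding $\tfrac{\partial\bfm p}{\partial\xi}\times\tfrac{\partial\bfm p}{\partial\eta}$ and isolating the $u_0\times u_1$ coefficient, which is the same quadratic as in (2).
\end{enumerate}

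\textbf{First-order parts.} Your identity $\det(t,a+b,a-b)=-2\det(t,a,b)$, combined with the Bernstein coefficients of $[t,a]$, is the same computation in invariant form. It is slightly cleaner than the paper's step of dividing by $\xi$ and $1-\xi$. Your argument for (4) is likewise a coordinate-free version of the paper's.

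\textbf{The $G^2$ step.} This is where you genuinely differ. Your reflection argument on $\mathrm{II}$ never uses the degree of the patches. It therefore shows that any mirror-symmetric $G^1$ join along a curve in the mirror plane, with normals lying in that plane, is automatically $G^2$. The paper's computation is specific to quadratics, but it hands you explicit $s_2,t_2$.

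The cost of your main version is that it relies on the standard equivalence between the parametric $G^2$ condition (existence of $s_2,t_2$, which is how the paper uses it) and coincidence of second fundamental forms for regular $G^1$ joins. You should cite or justify that equivalence.

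Your fallback closes this directly. After $(s,\eta)\mapsto(s-\alpha\eta,\eta)$ the cross-boundary derivatives are $\pm 2b$. Moreover, $\tfrac{\partial^2\tilde{\bfm q}}{\partial\eta^2}$ is the mirror image of $\tfrac{\partial^2\tilde{\bfm p}}{\partial\eta^2}$, so their difference is parallel to $n$, hence to $\tfrac{\partial\tilde{\bfm p}}{\partial\eta}$. The parametric $G^2$ relation therefore holds with $s_1=-1$ and $t_1=t_2=0$. This is arguably a shorter proof of (1)$\Rightarrow$(3) than either the paper's or your main one.

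\textbf{A small correction.} The half-space hypothesis gives only that $v_{00}\cdot n$ and $v_{10}\cdot n$ are nonzero with the same sign (the paper's $\gamma_0\gamma_1>0$), not that both are positive. That is all you actually use.
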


\begin{proof}
Let us write $v_{00}=\alpha_0 u_0+\alpha_1 u_1+\gamma_0 u_0\times u_1$, $w_{00}=\alpha_0 u_0+\alpha_1 u_1-\gamma_0 u_0\times u_1$, $v_{10}=\beta_0 u_0+\beta_1 u_1+\gamma_1 u_0\times u_1$, $w_{10}=\beta_0 u_0+\beta_1 u_1-\gamma_1 u_0\times u_1$, $v_{01}=\mathcal{P}_{\Pi}(v_{01})+\gamma_2 u_0\times u_1$, and $w_{01}=\mathcal{P}_{\Pi}(w_{01})-\gamma_2 u_0\times u_1$, for some scalars \(\alpha_i\), \(\beta_i\), \(i=0,1\), and \(\gamma_i\), \(i=0,1,2\), such that $\gamma_0\gamma_1>0$.

The spline is $G^1$ continuous if there exist functions $s_1(\xi)$ and $t_1(\xi)$ such that $\tfrac{\partial{\bfm q}}{\partial\eta}(\xi,0)=s_1(\xi) \tfrac{\partial{\bfm p}}{\partial\eta}(\xi,0)+t_1(\xi) \tfrac{\partial{\bfm p}}{\partial\xi}(\xi,0)$. Since
\begin{align*}
\tfrac{\partial{\bfm q}}{\partial\eta}(\xi,0)&=2 \left((1-\xi)\alpha_0+\xi\beta_0\right) u_0+2 \left((1-\xi)\alpha_1+\xi\beta_1\right)u_1-2\left((1-\xi)\gamma_0+\xi\gamma_1\right) u_0\times u_1,\\
\tfrac{\partial{\bfm p}}{\partial\eta}(\xi,0)&=2 \left((1-\xi)\alpha_0+\xi\beta_0\right) u_0+2 \left((1-\xi)\alpha_1+\xi\beta_1\right)u_1+2\left((1-\xi)\gamma_0+\xi\gamma_1\right) u_0\times u_1,\\
\tfrac{\partial{\bfm p}}{\partial\xi}(\xi,0)&=2(1-\xi) u_0+2\xi u_1,
\end{align*}
comparing the coefficients of $u_0\times u_1$ we get $s_1(\xi)=-1$. Hence
\[
4\left((1-\xi)\alpha_0+\xi\beta_0\right)u_0+4\left((1-\xi)\alpha_1+\xi\beta_1\right)u_1= \tfrac{\partial{\bfm q}}{\partial\eta}(\xi,0)-s_1(\xi) \tfrac{\partial{\bfm p}}{\partial\eta}(\xi,0)=t_1(\xi) \tfrac{\partial{\bfm p}}{\partial\xi}(\xi,0)=t_1(\xi)(2(1-\xi) u_0+2\xi u_1)
\]
and comparing the coefficients at $u_0$ and at $u_1$ we obtain
\[
t_1(\xi)=\frac{4\left((1-\xi)\alpha_0+\xi\beta_0\right)}{2(1-\xi)}=\frac{4\left((1-\xi)\alpha_1+\xi\beta_1\right)}{2\xi}.
\]
From this, we conclude that $\alpha_1=\beta_0=0$, $\beta_1=\alpha_0=:\alpha$, and $t_1(\xi)=2\alpha$, so $\mathcal{P}_{\Pi}(v_{00})=\alpha u_0$ and $\mathcal{P}_{\Pi}(v_{10})=\alpha u_1$. Therefore
\[
\tfrac{\partial^2{\bfm q}}{\partial\eta^2}(\xi,0)=
(s_1(\xi))^2 \tfrac{\partial^2{\bfm p}}{\partial\eta^2}(\xi,0)+
2s_1(\xi)t_1(\xi) \tfrac{\partial^2{\bfm p}}{\partial\xi\partial\eta}(\xi,0)+
(t_1(\xi))^2 \tfrac{\partial^2{\bfm p}}{\partial\xi^2}(\xi,0)+
s_2(\xi)\tfrac{\partial{\bfm p}}{\partial\eta}(\xi,0)+
t_2(\xi)\tfrac{\partial{\bfm p}}{\partial\xi}(\xi,0),
\]
where $s_2(\xi)=\tfrac{2 \left((1-2\alpha)\gamma_0+2\alpha\gamma_1-\gamma _2\right)}{(1-\xi)\gamma_0+\xi\gamma_1}$ and $t_2(\xi)=\tfrac{2\alpha \left(\gamma_2-2 (1-2\alpha)\gamma_0-2\alpha\gamma_1\right)}{(1-\xi)\gamma_0+\xi\gamma_1}$. Since $\gamma_0$ and $\gamma_1$ have the same sign, the denominator is nonzero for all $\xi\in[0,1]$. Therefore, the functions $s_2$ and $t_2$ are well defined, hence the spline is also $G^2$ continuous. 

We proved that the first three statements are equivalent. Moreover, the first statement implies the last one. Let us show that the last statement is equivalent to the second one. The normal vector to the tangent plane at $\bfm{p}(\xi,0)$ is 
\begin{align*}
\tfrac{\partial}{\partial\xi}\bfm{p}(\xi,0)\times \tfrac{\partial}{\partial\eta}\bfm{p}(\xi,0)
&=(2(1-\xi) u_0+2\xi u_1)\times (2 \left((1-\xi)\alpha_0+\xi\beta_0\right) u_0+2 \left((1-\xi)\alpha_1+\xi\beta_1\right)u_1+2\left((1-\xi)\gamma_0+\xi\gamma_1\right) u_0\times u_1)\\
&=2\left((1-\xi)\gamma_0+\xi\gamma_1\right)\tfrac{\partial}{\partial\xi}\bfm{p}(\xi,0)\times(u_0\times u_1)+4 \left((1-\xi )^2 \alpha _1+(1-\xi ) \xi  \left(\beta _1-\alpha _0\right)-\xi ^2 \beta _0\right)u_0\times u_1.
\end{align*}
Since $(1-\xi)\gamma_0+\xi\gamma_1\ne 0$ for all $\xi\in[0,1]$, the normal vector of the tangent plane at $\bfm{p}(\xi,0)$ is $\tfrac{\partial}{\partial\xi}\bfm{p}(\xi,0)\times(u_0\times u_1)$ if and only if $(1-\xi )^2 \alpha _1+(1-\xi ) \xi  \left(\beta _1-\alpha _0\right)-\xi ^2 \beta _0=0$, which is equivalent that $\alpha_1=\beta_0=0$ and $\beta_1=\alpha_0$.
\end{proof}

Assume that the patch $\mathbf{p}$ is symmetric with respect to the perpendicular bisector of the line segment joining the vertices $\mathbf{P}_{0,0,2}$ and $\mathbf{P}_{2,0,0}$ and that it induces a $G^1$ spline with its mirror image. Then $\mathcal{P}_{\Pi}(v_{00}) = \alpha u_0$ and $\mathcal{P}_{\Pi}(v_{10}) = (1 - \alpha) u_1$. Hence, the spline is $G^1$ continuous if and only if $\alpha = \tfrac{1}{2}$. This is equivalent to $(v_{00}-\tfrac 1 2 u_0)\times(u_0\times u_1)=0$.

\section{Spherical cap}\label{sec:SphericalCap}

In this section, we construct a $G^1$ approximation of a spherical cap in the form of a spline of triangular patches over a regular spherical polygon. We then show that such approximations over the faces of uniform polyhedra cannot be combined to obtain a $G^1$ approximation of the entire sphere.

Using the above theorem, it is easy to construct a symmetric $G^1$ continuous spline of quadratic triangular patches approximating a spherical cap whose boundary is a regular (curved) $n$-gon for some $n\ge 3$. Without loss of generality, its vertices lie on the unit sphere and the spherical cap can be rotated such that its center is $(0,0,1)$. Hence, the control points of one patch are 
\begin{align*}
b_{2,0,0}&=(0,0,\sqrt{1-r^2}+h)^T,& 
b_{0,2,0}&=\left(r\cos(-\tfrac{\pi}{n}),r\sin(-\tfrac{\pi}{n}),\sqrt{1-r^2}\right)^T,&
b_{0,0,2}&=\left(r\cos(\tfrac{\pi}{n}),r\sin(\tfrac{\pi}{n}),\sqrt{1-r^2}\right)^T,\\
b_{0,1,1}&=(x,0,\sqrt{1-r^2}+z)^T,&
b_{1,1,0}&=\left(s\cos(-\tfrac{\pi}{n}),s\sin(-\tfrac{\pi}{n}),\sqrt{1-r^2}+h\right)^T,&
b_{1,0,1}&=\left(s\cos(\tfrac{\pi}{n}),s\sin(\tfrac{\pi}{n}),\sqrt{1-r^2}+h\right)^T,
\end{align*}
and the remaining patches are obtained by rotating this patch by the angle $\tfrac{2\pi}n$ around the $z$-axis. Hence $u_0=b_{1,0,1}-b_{0,0,2}$, $u_1=b_{2,0,0}-b_{1,0,1}$, $v_{00}=b_{0,1,1}-b_{0,0,2}$, and $v_{10}=b_{1,1,0}-b_{1,0,1}$. Since the adjacent patch is obtained by rotation, the normal to the tangent plane at the point $b_{0,0,2}$ is equal to $u_0\times(u_0\times u_1)$, where $u_0\times u_1=hs(\sin(\tfrac\pi n),-\cos(\tfrac\pi n),0)^T$. Similarly, we can compute the normal to the tangent plane at the point $b_{0,2,0}$. Since $b_{0,1,1}$ lies on both tangent planes, we obtain $z = \tfrac{h}{r-s}\left(r-x \cos\left(\tfrac{\pi}{n}\right)\right)$. Then
\begin{align*}
v_{00}=\frac{r-x \cos \left(\frac{\pi }{n}\right)}{r-s} u_0+\frac{x \sin \left(\frac{\pi }{n}\right)}{h s}u_0\times u_1\quad\text{and}\quad
v_{10}=2 \sin ^2\left(\frac{\pi }{n}\right) u_1+\frac{\sin \left(\frac{2 \pi }{n}\right)}{h} u_0\times u_1.
\end{align*}
By the theorem, the spline is $G^1$ continuous if $({r-s})^{-1}\left(r-x \cos \left(\frac{\pi }{n}\right)\right)=2 \sin ^2\left(\frac{\pi }{n}\right)$, hence
\begin{align}\label{eq:CapParameters}
\begin{split}
x&=\cos^{-1}(\tfrac{\pi}{n})\left(s+(r-s)\cos(\tfrac{2\pi}{n}) \right),\\
z&= \tfrac{h}{r-s}\left(r-x \cos\left(\tfrac{\pi}{n}\right)\right)= \tfrac{h}{r-s}\left(r-\cot(\tfrac{\pi}{n})\left(s+(r-s)\cos(\tfrac{2\pi}{n}) \right)\right).
\end{split}
\end{align}

Below, we present examples of approximations of a spherical cap with base radius \(\tfrac{\sqrt{2}}{2}\) using meshes consisting of 4 to 8 triangles (see \Cref{fig:SphericalCupCurvature}). In each case, all vertices lie on the unit sphere, and each outer boundary curve lies in the plane determined by its endpoints and the origin.

It can be seen that the \(G^1\) continuity condition forces the Gaussian curvature near the common vertex to be very small. On the other hand, increasing the number of triangles results in an increase in the maximum Gaussian curvature. More precisely, the Gaussian curvature ranges from \(0.34\) to \(0.71\) for the mesh with 4 triangles, from \(0.24\) to \(1.14\) for 5 triangles, from \(0.20\) to \(1.76\) for 6 triangles, from \(0.18\) to \(2.30\) for 7 triangles, and from \(0.16\) to \(2.75\) for 8 triangles.
\begin{figure}[htb]
\begin{center}
\includegraphics[width=0.19\textwidth]{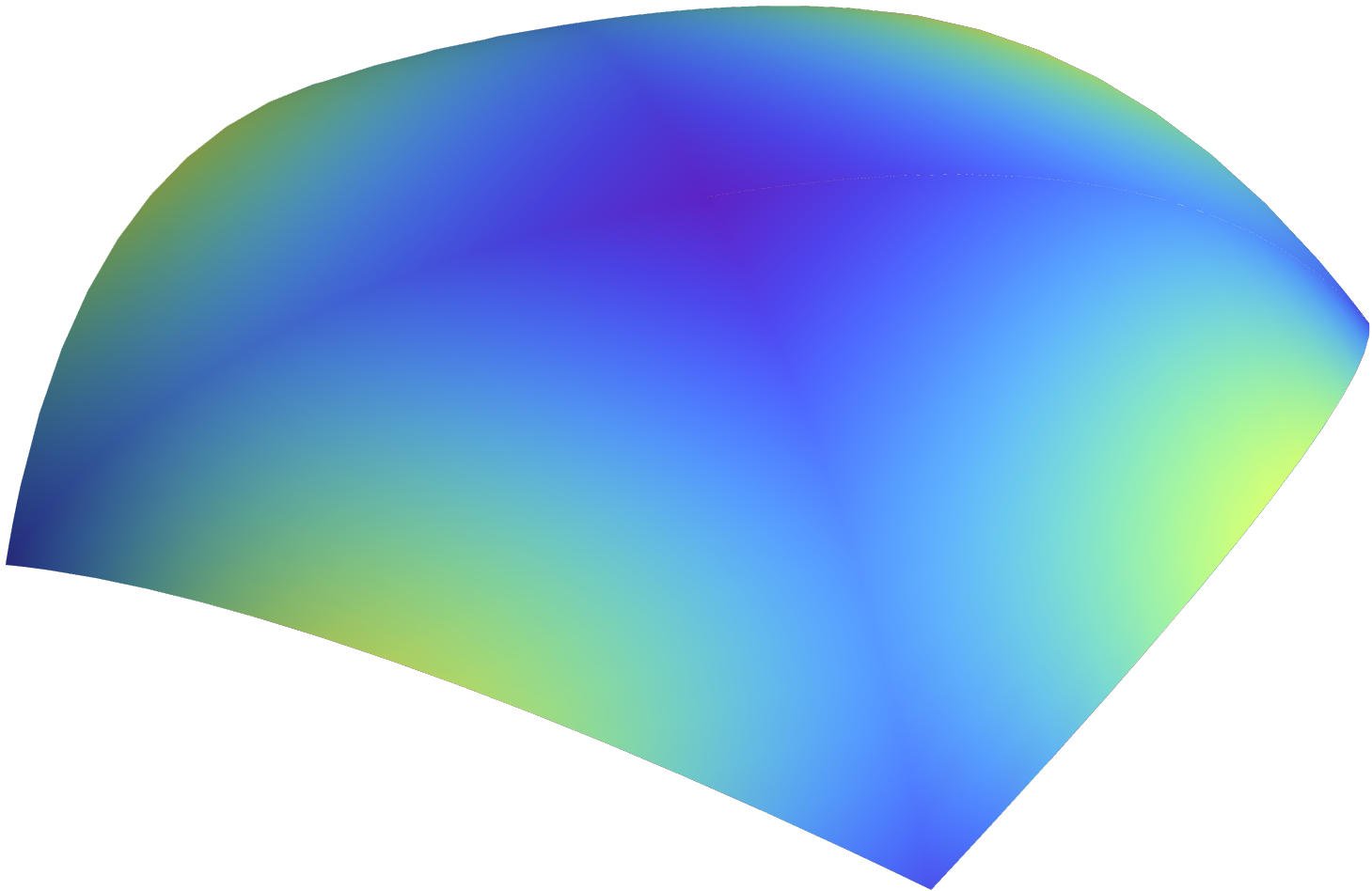}
\includegraphics[width=0.19\textwidth]{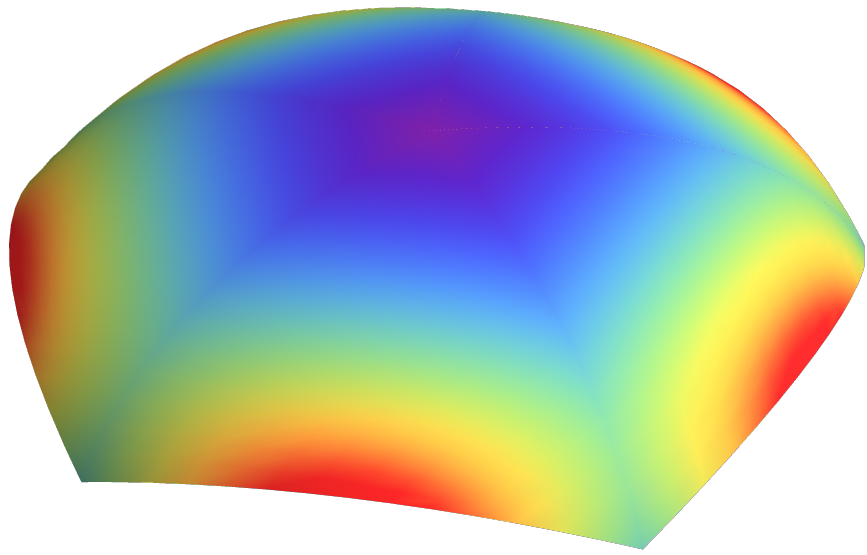}
\includegraphics[width=0.19\textwidth]{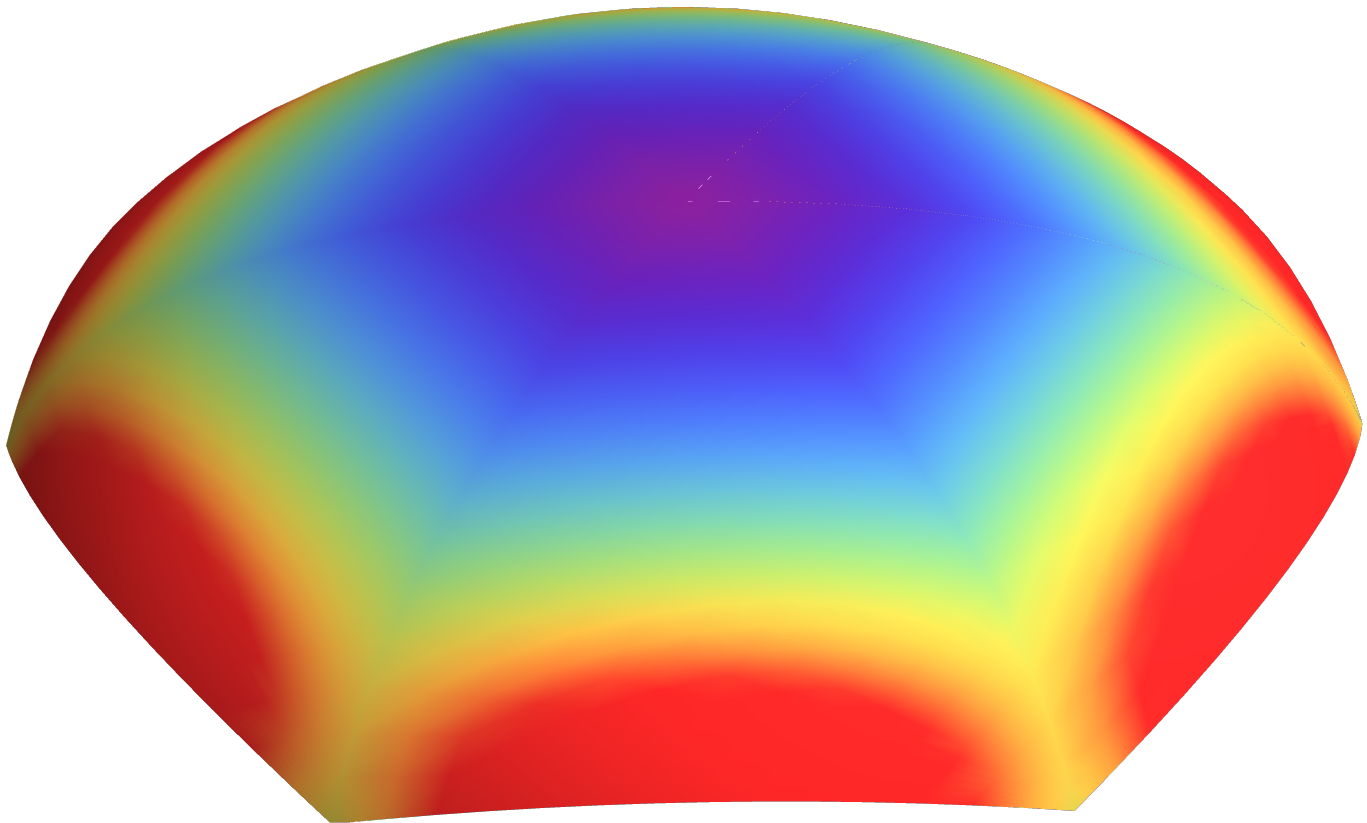}
\includegraphics[width=0.19\textwidth]{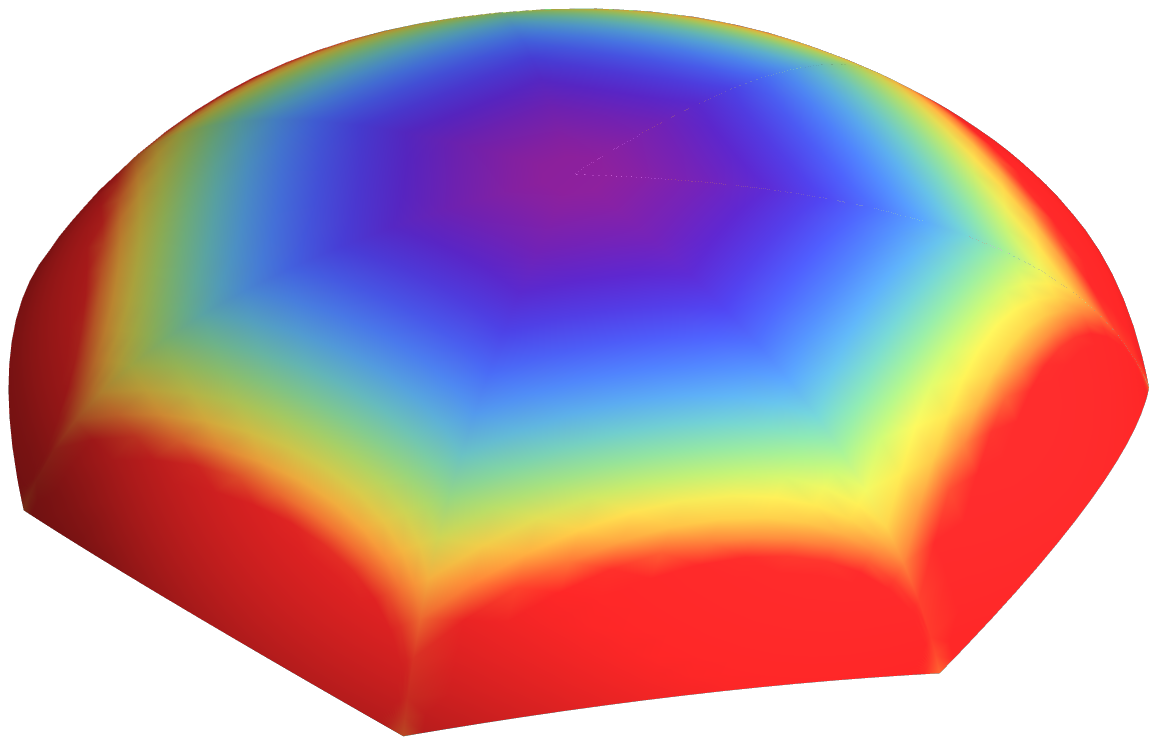}
\includegraphics[width=0.19\textwidth]{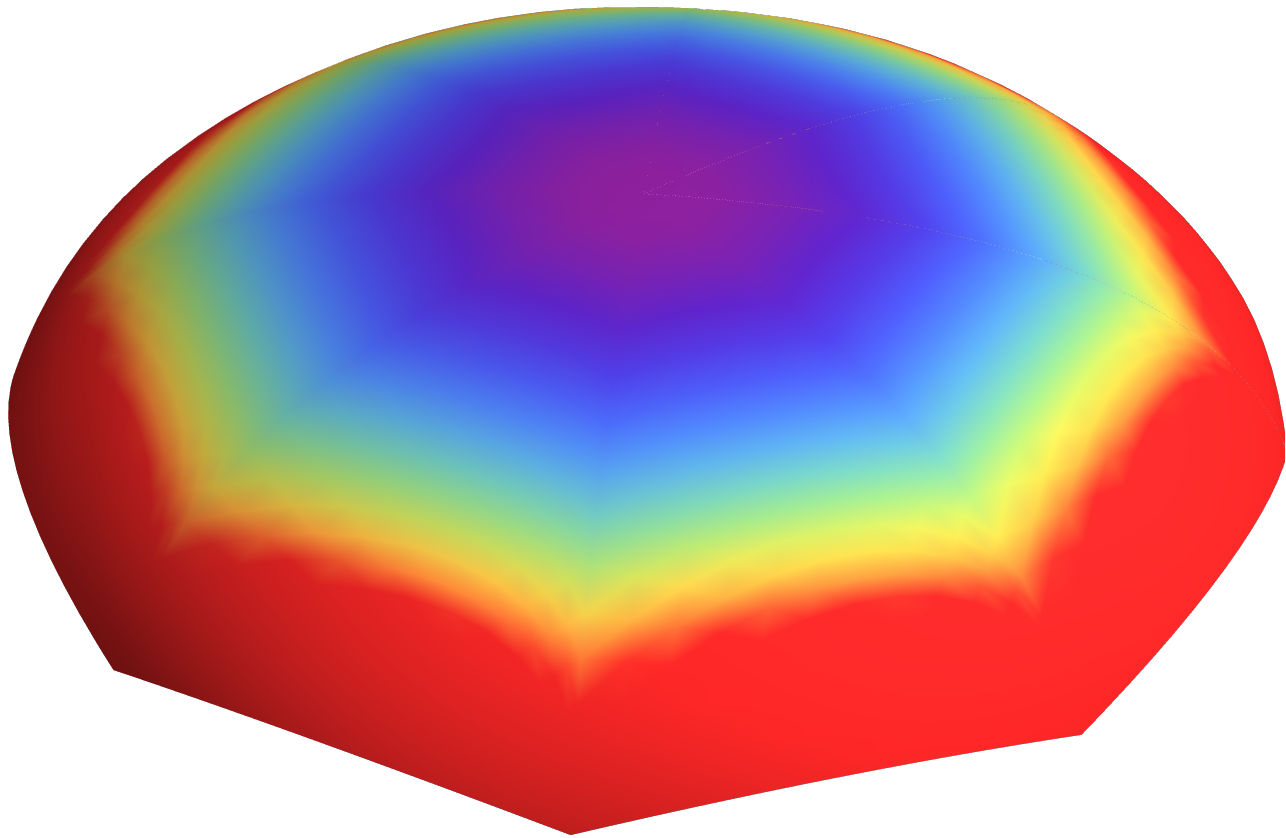}
\caption{Approximations of a spherical cap with base radius \(\tfrac{\sqrt{2}}{2}\), or equivalently, cap height \(1-\tfrac{\sqrt{2}}{2}\). The color of each approximant represents the Gaussian curvature, with dark blue corresponding to \(0.2\) and dark red to \(2.5\).}
\label{fig:SphericalCupCurvature}
\end{center}
\end{figure}
Furthermore, the maximum radial distances of the mesh points from the unit sphere are \(0.093\), \(0.085\), \(0.089\), \(0.095\), and \(0.100\), respectively. Hence, increasing the number of triangles does not improve the quality of the spherical cap approximation.

There is no quadratic $G^1$ spline approximation of the unit sphere induced by any of the three Platonic solids with equilateral spherical triangulations (see \cite{VAVPETIC2022102061}). The next simplest example of a quadratic $G^1$ spline over triangles is obtained by subdividing each regular $n$-gonal face of a uniform polyhedron into $n$ isosceles triangles, as shown on the left of \Cref{fig:SphericalCupTriangulation}.
\begin{figure}[htb]
\begin{center}
\includegraphics[width=0.3\textwidth]{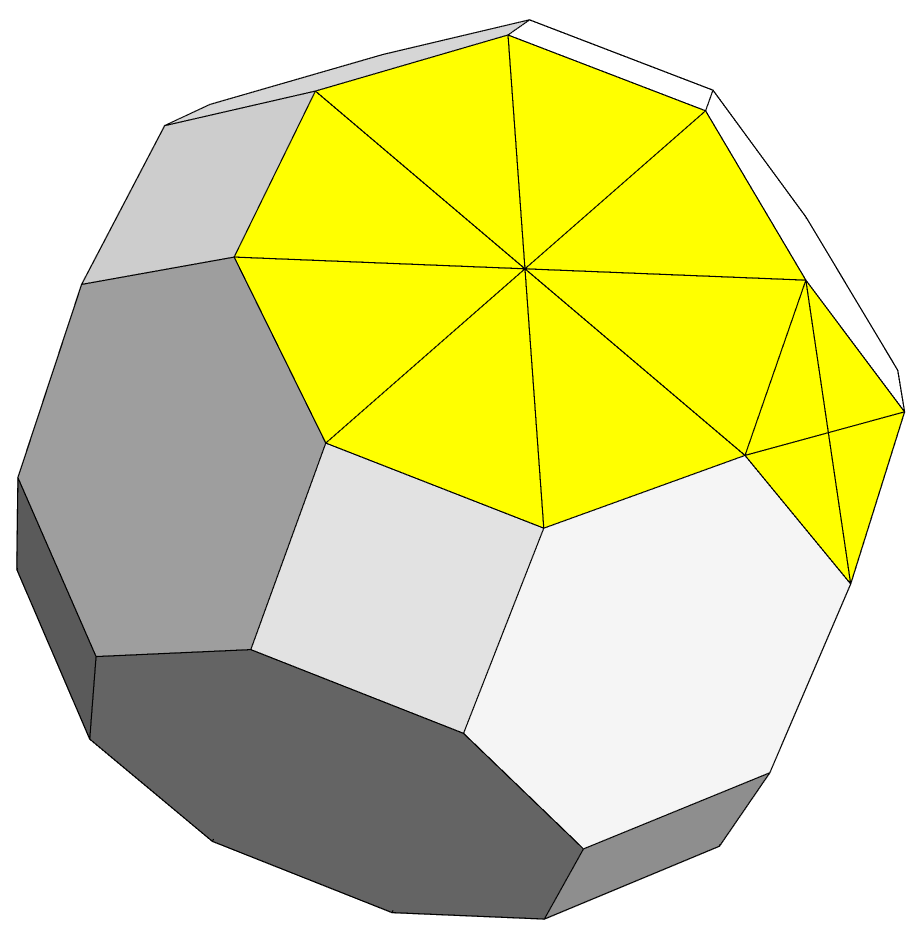}
\hskip2cm
\includegraphics[width=0.3\textwidth]{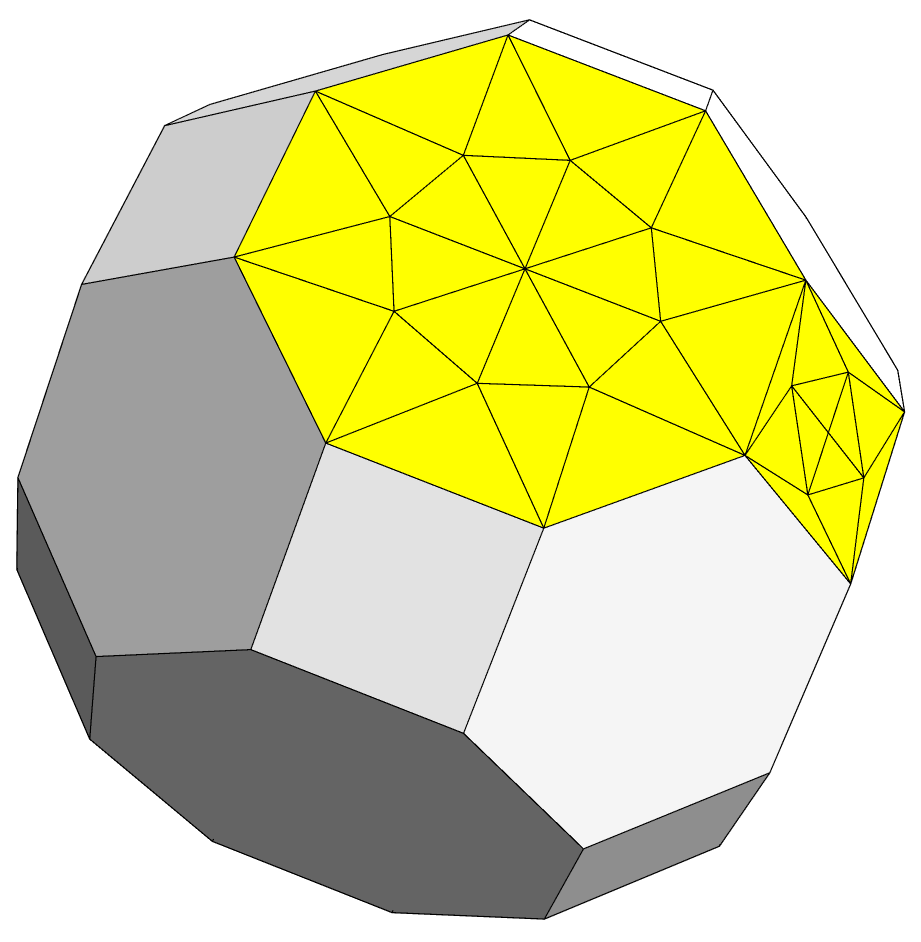}
\caption{Two ways to triangulate faces of a uniform solid to approximate spherical caps.}
\label{fig:SphericalCupTriangulation}
\end{center}
\end{figure}
We would like to construct a spline such that, along every edge of the uniform polyhedron, both the boundary curve and the associated tangent planes are defined identically on the two patches sharing that edge. This is made possible by the above theorem. We place the control point $\bfm{b}_{0,1,1}$ in the plane determined by the origin and the vertices $\bfm{b}_{0,2,0}$ and $\bfm{b}_{0,0,2}$, obtaining $h=\sqrt{1-r^2} (2 s-r)r^{-1}\left(1+\cos \left(\tfrac{2 \pi }{n}\right)\right)^{-1}$. We then require that this patch, together with its mirror image across the above-mentioned plane, form a $G^1$ spline. By the theorem, the tangent planes at the boundary points depend only on the boundary curve itself. From the remark following the theorem, we obtain that $(\bfm{b}_{1,1,0}-\bfm{b}_{0,2,0})-\tfrac  1 2(\bfm{b}_{0,1,1}-\bfm{b}_{0,2,0})$ is parallel to $(\bfm{b}_{0,1,1}-\bfm{b}_{0,2,0})\times(\bfm{b}_{0,0,2}-\bfm{b}_{0,1,1})$ which is impossible.

\section{Triangulation of faces with an additional ring of triangles}\label{sec:Triangulation}
As we have seen, triangulating an \(n\)-gon using only \(n\) isosceles triangles is not sufficient to construct a \(G^1\) spline. Therefore, we add a ring of \(2n\) triangles around them as follows. Inside a regular $n$-gon with vertices $V_i$, $i=1,\ldots,n$, we inscribe a smaller regular $n$-gon with vertices $W_i$, $i=1,\ldots,n$, having the same center $C$ as the larger polygon and is rotated by an angle of $\tfrac{\pi}n$ relative to it. The resulting triangulation consists of three types of isosceles triangles, namely $V_iV_{i+1}W_i$, $W_iW_{i+1}V_{i+1}$, and $W_iW_{i+1}C$, $i=1,\ldots,n$, where $V_{n+1}=V_1$ and $W_{n+1}=W_1$ (see \Cref{fig:triangulation2} and the right-hand image in \Cref{fig:SphericalCupTriangulation}).
\newcommand{\Triangulacija}[1]{
\def\n{#1};
\def\r{3};
\def\rr{3.3};
\def\s{1};
\def\ss{1.3};
\draw (180/\n:\r)\foreach \x in {1,...,\n}{ -- (\x*360/\n+180/\n:\r)};
\draw (0:\s)\foreach \x in {1,...,\n}{ -- (\x*360/\n:\s)};
\foreach \x in {1,...,\n}{\draw (\x*360/\n-3*180/\n:\rr) node{$V_\x$};}
\foreach \x in {1,...,\n}{\draw (\x*360/\n-360/\n:\ss) node[anchor=center, shift={(\x*360/\n-360/\n:0.1)}]{$W_\x$};}
\foreach \x in {1,...,\n}{\draw (\x*360/\n-180/\n:\r)--(\x*360/\n:\s)--(\x*360/\n+180/\n:\r);}
\foreach \x in {1,...,\n}{\draw (0,0)--(\x*360/\n:\s);}
\draw (180/\n:0.3)node{$C$};}
\begin{figure}[h!]
\begin{center}
\begin{tikzpicture}[line join=round, line cap=round, thick,scale=0.8]
\Triangulacija{3}
\begin{scope}[xshift=5.5cm]
\Triangulacija{4}    
\end{scope}
\begin{scope}[xshift=12.5cm]
\Triangulacija{5}
\end{scope}
\end{tikzpicture}
\caption{Triangulations of square, triangular, and pentagonal faces}\label{fig:triangulation2}
\end{center}
\end{figure}

It suffices to construct B\'ezier patches over the triangles \(\Delta_1 := V_1 W_1 V_2\), \(\Delta_2 := V_1 W_1 W_n\), and \(\Delta_3 := W_1 C W_n\); a spline over the entire face is then obtained by applying appropriate rotations to these representative patches. Let $\bfm{b}^m_{i,j,k}$ denote the control points and $\bfm{p}_m(\xi,\eta)$ the parametrization of the patch associated with triangle $\Delta_m$, $m = 1,2,3$. 
Without loss of generality, we may assume that the face is parallel to $xy$-plane, that its centroid lies on the positive $z$-axis, and that it is oriented as shown in \Cref{fig:triangulation2}, so the vertex control points of the patch over the triangle $\Delta_1$ are 
\begin{equation*}
\bfm{b}^1_{0,0,2} =\left(rc, - r\sqrt{1-c^2}, \sqrt{1-r^2} \right)^T, \quad
\bfm{b}^1_{0,2,0} =(x_1,0,z_1)^T, \quad
\bfm{b}^1_{2,0,0} =\left(rc,  r\sqrt{1-c^2}, \sqrt{1-r^2}\right)^T, 
\end{equation*}
where $c = \cos \frac{\pi}{n}$ and $x_1,z_1>0$. To ensure $G^0$ continuity between patches defined over adjacent faces, the control point $\bfm{b}^1_{1,0,1}$ must lie in the plane determined by the points $\bfm{b}^1_{0,0,2}$, $\bfm{b}^1_{2,0,0}$, and the origin. For $G^1$ continuity at $\bfm{b}^1_{0,0,2}$, all directional vectors at this point must be orthogonal to its position vector. A similar condition holds at $\bfm{b}^1_{2,0,0}$. Therefore, the control point $\bfm{b}^1_{1,0,1}$ is the intersection of three planes: the previously mentioned plane and the tangent planes to the sphere $\mathcal{S}$ at both $\bfm{b}^1_{0,0,2}$ and $\bfm{b}^1_{2,0,0}$. Furthermore, the control points $\bfm{b}^1_{0,1,1}$ and $\bfm{b}^1_{1,1,0}$ lie in the tangent planes to the sphere $\mathcal{S}$ at $\bfm{b}^1_{0,0,2}$ and $\bfm{b}^1_{2,0,0}$, respectively. Consequently, we set
\begin{align*}
\bfm{b}^1_{0,1,1} = \left(x_2, -y_2 , \tfrac{1 - r(c x_2 + y_2\sqrt{1-c^2} ) }{\sqrt{1-r^2}} \right)^T,\quad
\bfm{b}^1_{1,0,1} =\left(\tfrac{c r}{1-(1-c^2) r^2}, 0, \tfrac{\sqrt{1-r^2}}{1-(1-c^2) r^2}\right)^T,\quad
\bfm{b}^1_{1,1,0} = \left(x_2, y_2 , \tfrac{1 - r(c x_2 + y_2\sqrt{1-c^2} ) }{\sqrt{1-r^2}}\right)^T, 
\end{align*}
where $x_2,y_2>0$. To define the control points associated with the triangle $\Delta_2$, let $\mathcal{R}_{\Sigma}$ denote the reflection across the plane $\Sigma$ containing the $z$-axis and the control point $\bfm{b}^1_{0,0,2}$. The equation of the plane $\Sigma$ is $\sqrt{1-c^2} x+c y = 0$. By symmetry, the control points are
\begin{align*}
\bfm{b}^2_{0,0,2} = \bfm{b}^1_{0,0,2},\quad\bfm{b}^2_{0,1,1} = \bfm{b}^1_{0,1,1},\quad
\bfm{b}^2_{0,2,0} =\bfm{b}^1_{0,2,0},\quad \bfm{b}^2_{1,0,1} = \mathcal{R}_{\Sigma}(\bfm{b}^1_{0,1,1}),\quad
\bfm{b}^2_{1,1,0} = \left(x_3, - \tfrac{\sqrt{1-c^2}}{c} x_3 , z_3 \right)^T,\quad
\bfm{b}^2_{2,0,0} = \mathcal{R}_{\Sigma}(\bfm{b}^1_{0,2,0}), 
\end{align*}
where $x_3,z_3>0$. For the control points associated with the triangle $\Delta_3$, we set
\begin{align*}
\bfm{b}^3_{0,0,2} =\bfm{b}^2_{0,2,0},\quad 
\bfm{b}^3_{0,1,1} =\bfm{b}^2_{1,1,0},\quad 
\bfm{b}^3_{0,2,0} = \bfm{b}^2_{2,0,0},\quad
\bfm{b}^3_{1,0,1} = (x_4,0,z_4)^T, \quad
\bfm{b}^3_{1,1,0} =\mathcal{R}_{\Sigma}(\bfm{b}^3_{1,0,1}),\quad
\bfm{b}^3_{2,0,0} =(0,0,z_4)^T, 
\end{align*}
where $x_4,z_4>0$. We have taken into account the fact that, for $G^1$ continuity at $\bfm{b}^3_{2,0,0}$, all directional vectors at this point must be orthogonal to its position vector. The remaining eight unknown parameters $x_1,z_1,x_2,y_2,x_3,z_3,x_4$ and $z_4$ are determined by $G^1$ continuity conditions along the four common edges of adjacent patches.

Let us start with the patch associated with triangle $\Delta_1$ and the boundary curve $\bfm{p}_1(\xi,0)$. As at the end of the previous section, we obtain the condition that the normal vector $(\bfm{b}^1_{1,0,1}-\bfm{b}^1_{0,0,2})\times(\bfm{b}^1_{2,0,0}-\bfm{b}^1_{1,0,1})$ is parallel to the vector $(\bfm{b}^1_{0,1,1}-\bfm{b}^1_{0,0,2})-\tfrac 1 2(\bfm{b}^1_{1,0,1}-\bfm{b}^1_{0,0,2})$ which yields $y_2 = \frac{1}{2}\sqrt{1-c^2}r$. 
For $G^1$ continuity between $\Delta_3$ and its mirror image with respect to the plane through the points $\bfm{b}^3_{0,0,2}$, $\bfm{b}^3_{2,0,0}$, and the coordinate origin, we can use equation (\ref{eq:CapParameters}). This yields $x_3=(2c^2-1)x_1+2(1-c^2)x_4$ and $z_3 = (2c^2-1) z_1+2(1-c^2) z_4$.
The tangent planes at the point $\bfm{p}_1(0,1)=\bfm{p}_2(0,1)=\bfm{p}_3(0,0)$ coincide. Because of the symmetry of the patch over $\Delta_1$, the second coordinate of the normal vector to the tangent plane is zero. The same therefore holds for the patch over $\Delta_2$, and we get $z_4=\tfrac{1}{2 \sqrt{1-r^2}(x_1-x_2)}\left((x_1-x_4) \left(2-\left(1-c^2\right) r^2-2 c r x_2\right)+2\sqrt{1-r^2} (x_4 z_1-x_2 z_1)\right)$.
Similarly, the $G^1$ continuity along the common edge $\bfm{p}_2(\eta,1-\eta)=\bfm{p}_3(0,\eta)$ yields the condition $\det(\tfrac{\partial}{\partial\xi}\bfm{p}_3(0,\eta),\tfrac{\partial}{\partial\eta}\bfm{p}_3(0,\eta),\tfrac{\partial}{\partial\xi}\bfm{p}_2(\eta,1-\eta))=0$. Using the above expressions for $y_2$, $x_3$, $z_3$ and $z_4$, we obtain that the determinant is of the form $k_1(x_1,x_2,x_4,z_1)\eta(1-\eta)$. The $G^1$ continuity along the common edge $\bfm{p}_1(0,\eta)=\bfm{p}_2(0,\eta)$ yields the condition $\det(\tfrac{\partial}{\partial\xi}\bfm{p}_1(0,\eta),\tfrac{\partial}{\partial\eta}\bfm{p}_1(0,\eta),\tfrac{\partial}{\partial\xi}\bfm{p}_2(0,\eta))=0$, where the determinant has a form $(k_2(x_1,x_2,x_4,z_1)\eta+k_3(x_1,x_2,x_4,z_1))\eta(1-\eta)$. Hence, for the remaining four variables, we obtain three rational equations $k_i(x_1,x_2,x_4,z_1)=0$, $i=1,2,3$. This system admits several one-parameter families of solutions, among which the following appears to be the most suitable:
\begin{align*}
z_1 = \tfrac{2-2 c r x_1+ \left(c^2-1\right) r^2}{2 \sqrt{1-r^2}},\quad
x_2 = \tfrac{cr +x_1}{2}, \quad
y_2 = \tfrac{r \sqrt{1-c^2}}{2},\quad
x_3= c^2x_1,\quad
z_3 = \tfrac{2-2 c^3 r x_1+ \left(c^2-1\right) r^2}{2 \sqrt{1-r^2}}, \quad
x_4 = \tfrac{x_1}{2},\quad
z_4 = \tfrac{2- c r x_1+ \left(c^2-1\right) r^2}{2 \sqrt{1-r^2}}.
\end{align*}
To ensure that the normal vector at $\bfm{p}_1(0,1)$ is nonzero, we must have $x_1 < x_2$. This is the only condition under which a normal vector can degenerate. Therefore, $0< x_1 < cr$. In the limiting case $x_1=cr$, we obtain a spline for which the normal vector of the tangent plane vanishes only at the point $\bfm{p}_1(0,1)=\bfm{p}_2(0,1)=\bfm{p}_3(0,0)$. Note that the normal vectors to the tangent planes
\begin{align*}
\bfm{n}_1(\xi,\eta)=\tfrac{\partial{\bfm{p}_1}}{\partial\xi}(\xi,\eta)\times\tfrac{\partial{\bfm{p}_1}}{\partial\eta}(\xi,\eta)&=\tfrac{2 \sqrt{1-c^2} r}{(1-\left(1-c^2\right) r^2)\sqrt{1-r^2}}\left((c r-x_1)+\left(1-c^2\right) r^2 (x_1-c r\eta )\right) \left(c r,\sqrt{1-c^2} r (\eta +2 \xi-1),\sqrt{1-r^2}\right)^T,\\  
\bfm{n}_2(\xi,\eta)=\tfrac{\partial{\bfm{p}_2}}{\partial\xi}(\xi,\eta)\times\tfrac{\partial{\bfm{p}_2}}{\partial\eta}(\xi,\eta)&= \tfrac{2 \sqrt{1-c^2}}{\sqrt{1-r^2}} x_1 (c x_1-r) \left(c r \left(1-2 \left(1-c^2\right) \xi \right),\sqrt{1-c^2} r \left(\eta +\left(1-2 c^2\right)\xi -1\right),\sqrt{1-r^2}\right)^T,\\    
\bfm{n}_3(\xi,\eta)=\tfrac{\partial{\bfm{p}_3}}{\partial\xi}(\xi,\eta)\times\tfrac{\partial{\bfm{p}_3}}{\partial\eta}(\xi,\eta)&= 2 \tfrac{c \sqrt{1-c^2}}{\sqrt{1-r^2}} x_1^2 \left(c r \left(1-2 \left(1-c^2\right) \eta -\xi\right),-2 c^2 \sqrt{1-c^2} r \eta,\sqrt{1-r^2}\right)^T,  
\end{align*}
are nonzero, since their last coordinates are nonzero, $r<1$ and $x_1<c r$.

The construction of the patches thus depends on the parameters $c$ and $r$, which are determined by the geometry of the polyhedron $\mathcal{P}$, as well as on the choice of the free parameter $x_1$. When $x_1$ is close to $cr$, the patches whose boundary curves lie above the edges of the Archimedean solid become thin, while the overall shape of the spline remains close to that of the sphere. As $x_1$ decreases, however, the spline becomes flatter near the vertices of the solid and develops bumps above the face centers, as illustrated in \Cref{fig:differentx1} for the icosahedron.
\begin{figure}[htb]
\begin{center}
\includegraphics[width=0.2\textwidth]{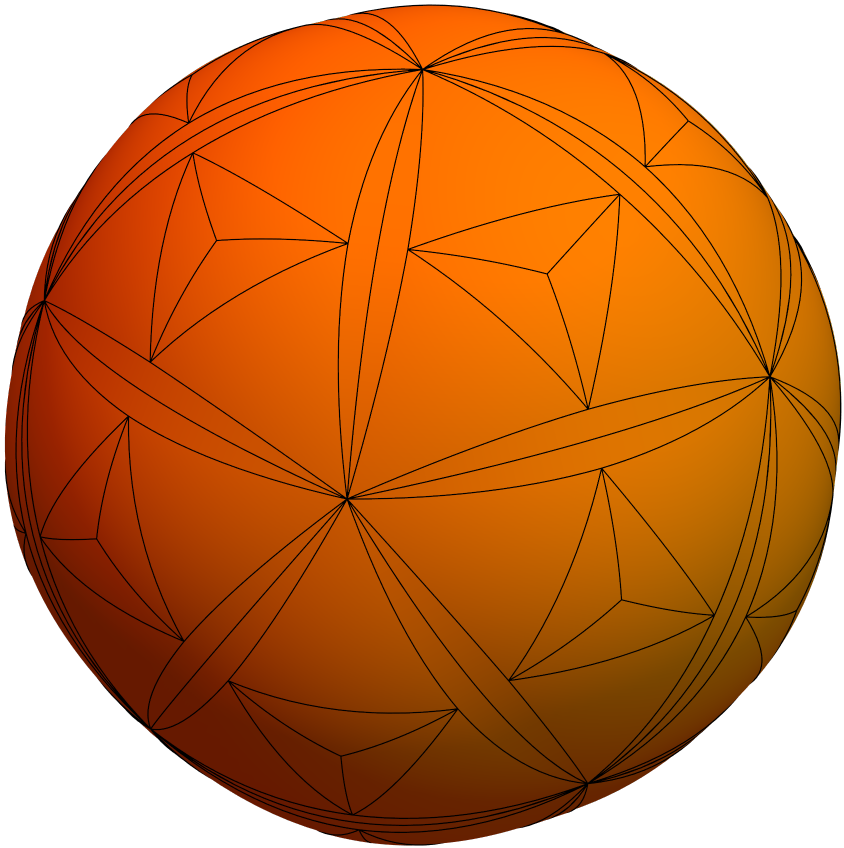}\hfil
\includegraphics[width=0.2\textwidth]{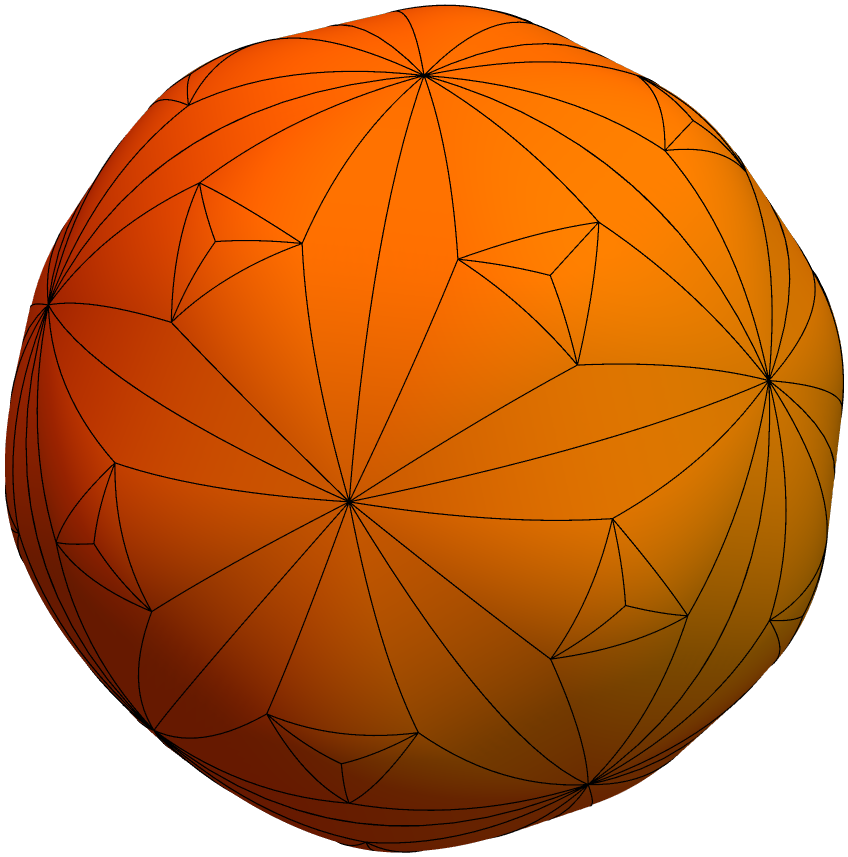}\hfil
\includegraphics[width=0.2\textwidth]{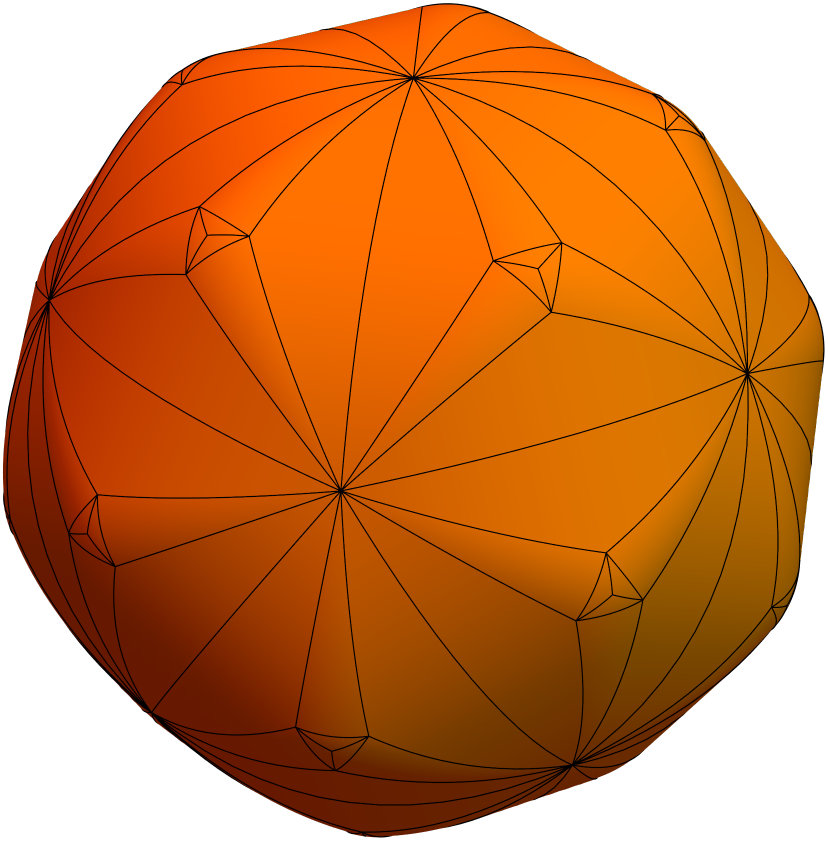}
\caption{\(G^1\) approximants of the icosahedron for \(x_1=0.3\), \(x_1=0.2\), and \(x_1=0.1\); the limiting value is \(cr\approx 0.3035\). The maximum radial distances between the approximants and the unit sphere are $0.055$, $0.095$, and $0.135$ for $x_1=0.3$, $x_1=0.2$, and $x_1=0.1$, respectively. The corresponding maximum Gaussian curvatures are $1.62$, $3.65$, and $14.59$.}
\label{fig:differentx1}
\end{center}
\end{figure}
From the figures, we observe that as the central triangles, i.e., those having one vertex above the center of a face of a polyhedron, become smaller, the quality of the approximation deteriorates both in terms of distance and curvature. Hence, the best regular approximants are obtained when the value of $x_1$ is as close as possible to the value $cr$. Figure \ref{fig:UniformSolids} illustrates all examples where $\mathcal{P}$ is a Platonic or Archimedean solid for $x_1=cr$. Table \ref{tab:errors} presents limiting values of the radial errors and Gaussian curvature ranges for the same polyhedra as $x_1$ approaches $cr$.
\begin{figure}[htb]
\begin{center}
\includegraphics[width=0.16\textwidth]{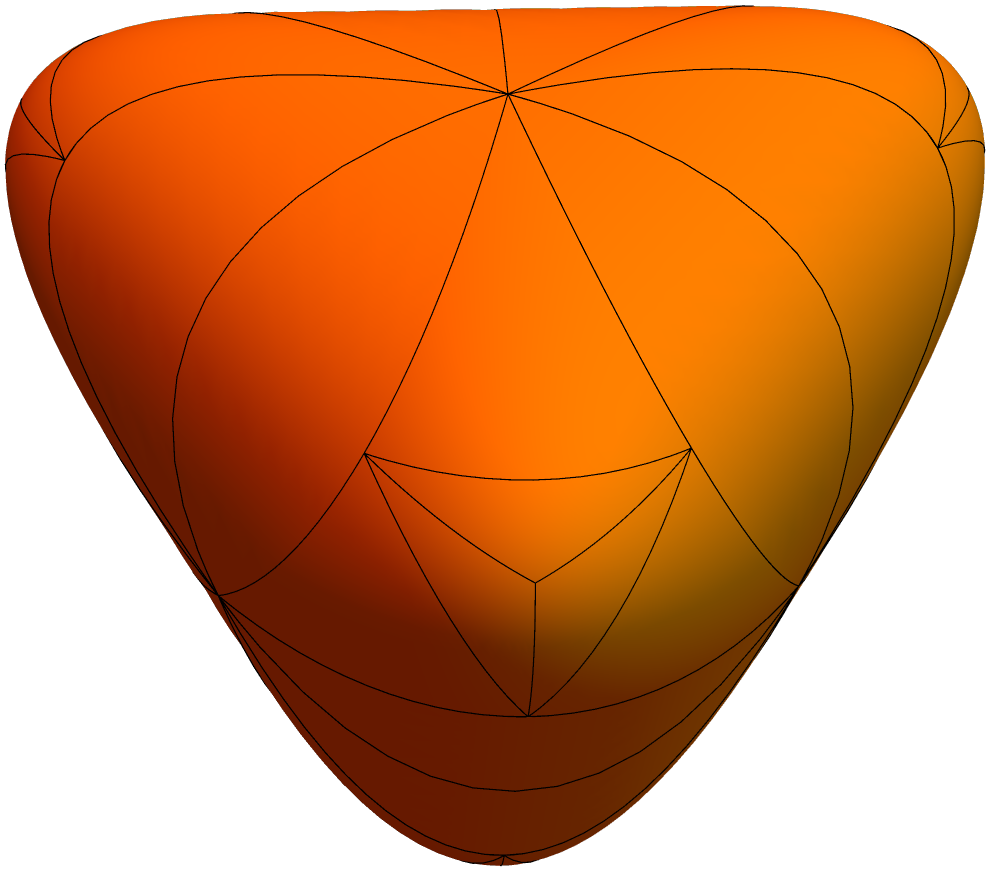}
\includegraphics[width=0.16\textwidth]{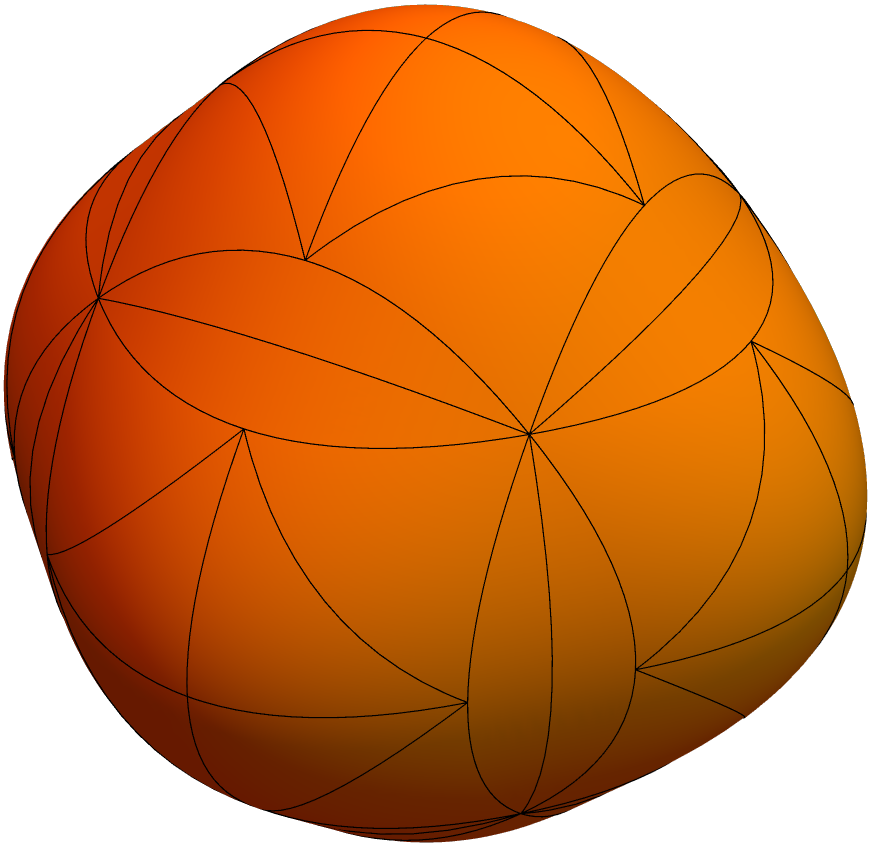}
\includegraphics[width=0.16\textwidth]{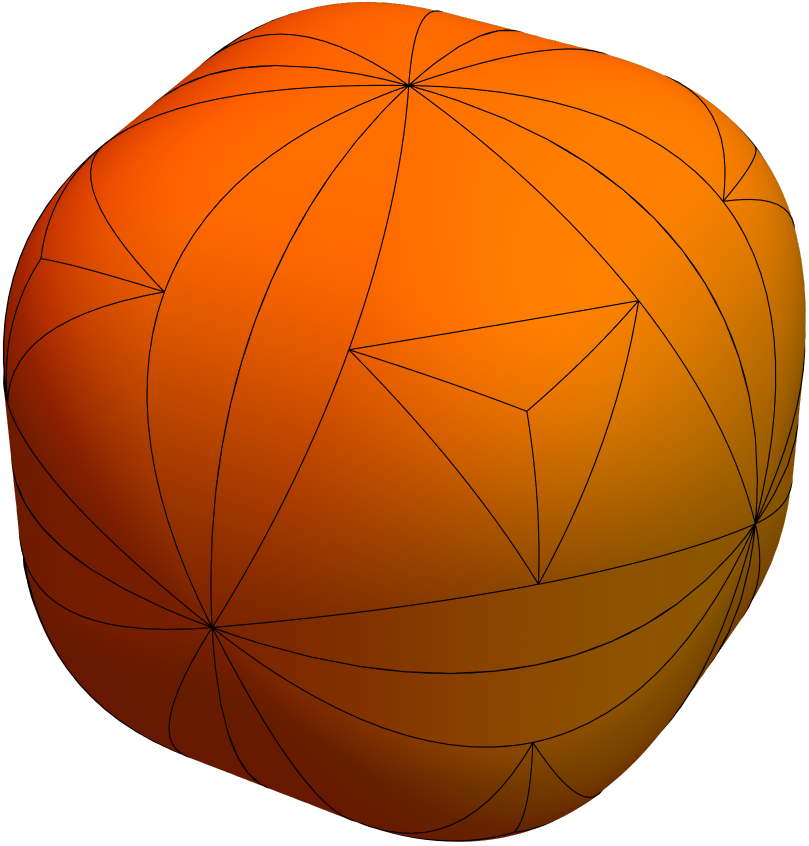}
\includegraphics[width=0.16\textwidth]{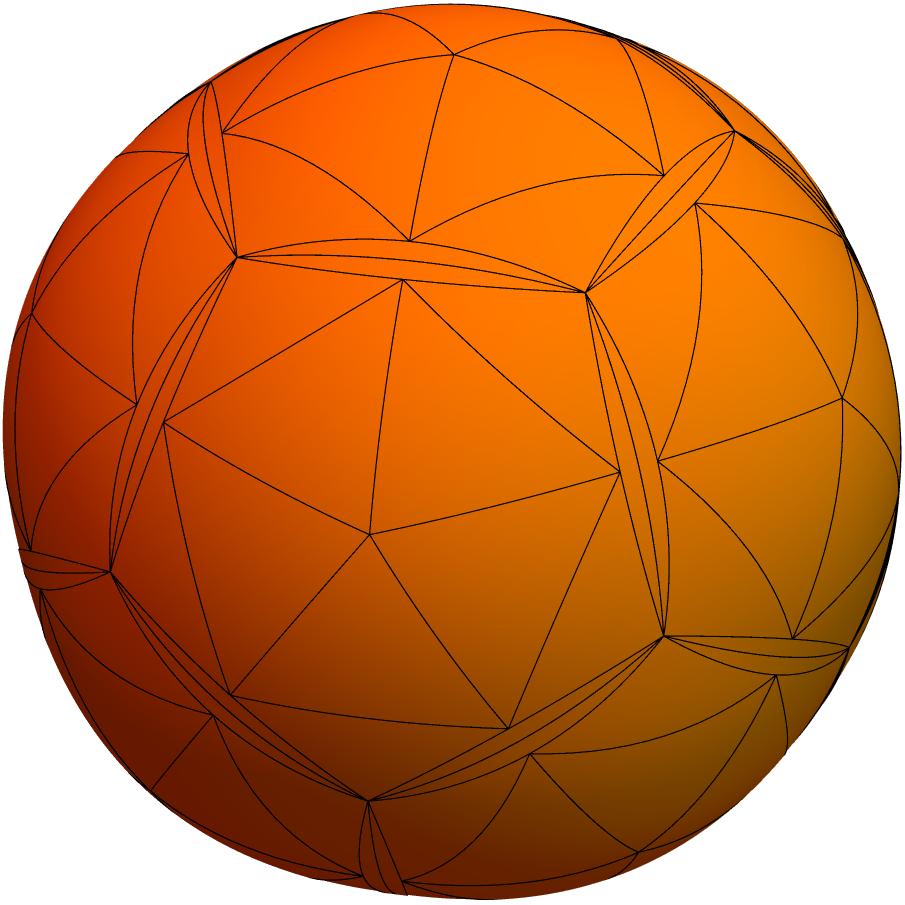}
\includegraphics[width=0.16\textwidth]{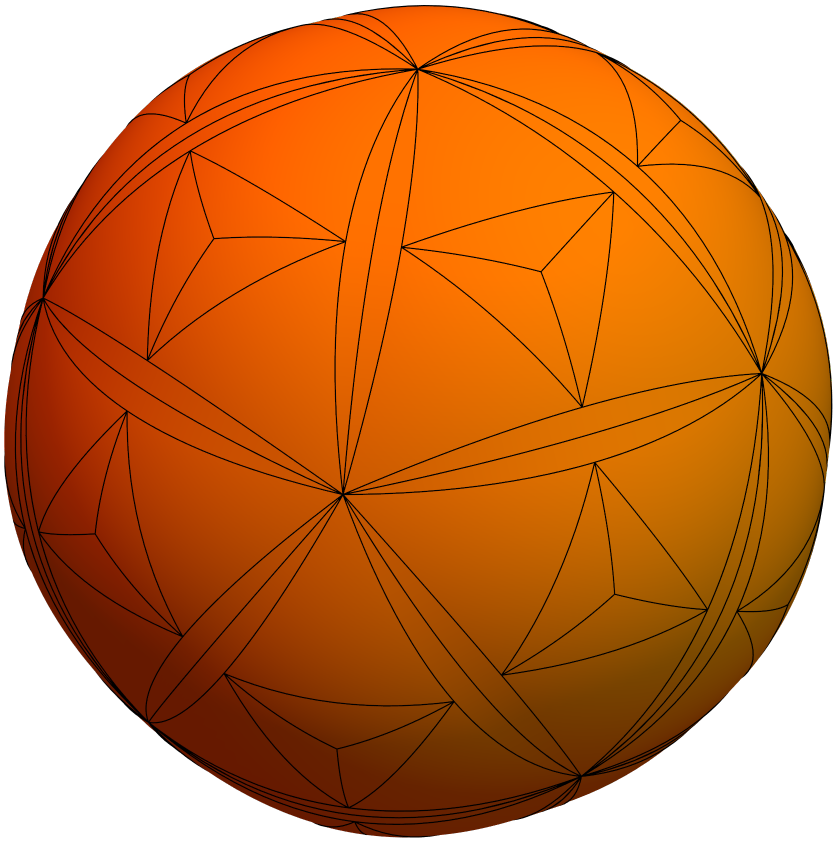}
\includegraphics[width=0.16\textwidth]{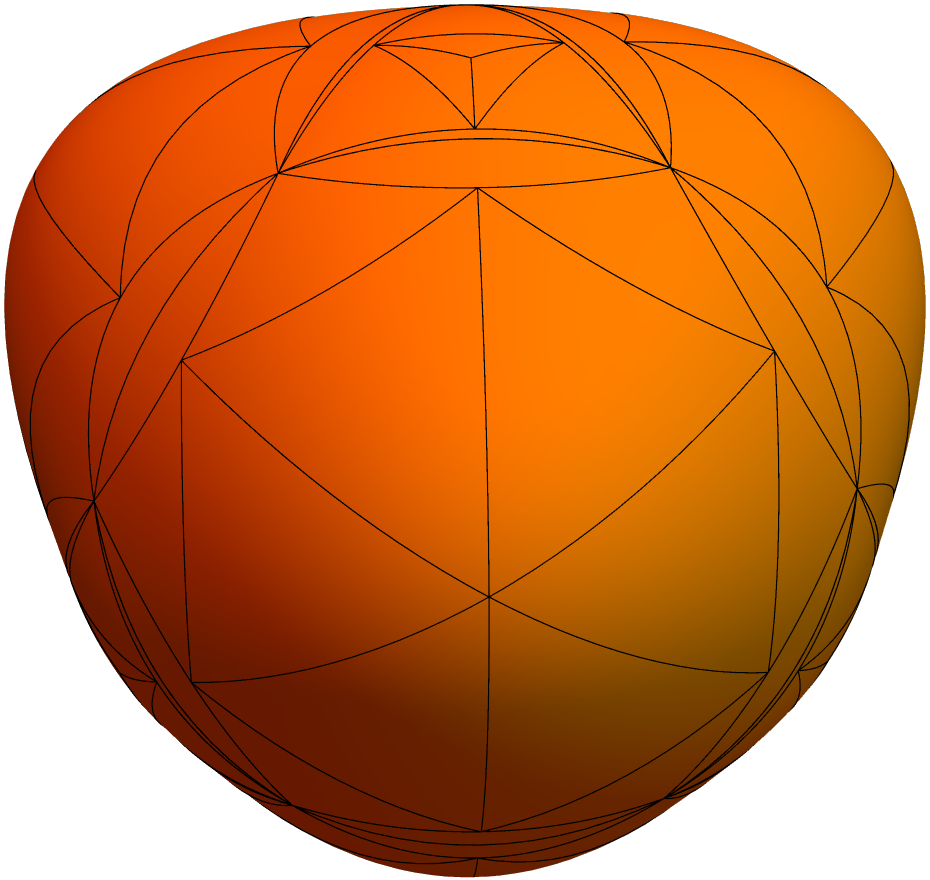}
\includegraphics[width=0.16\textwidth]{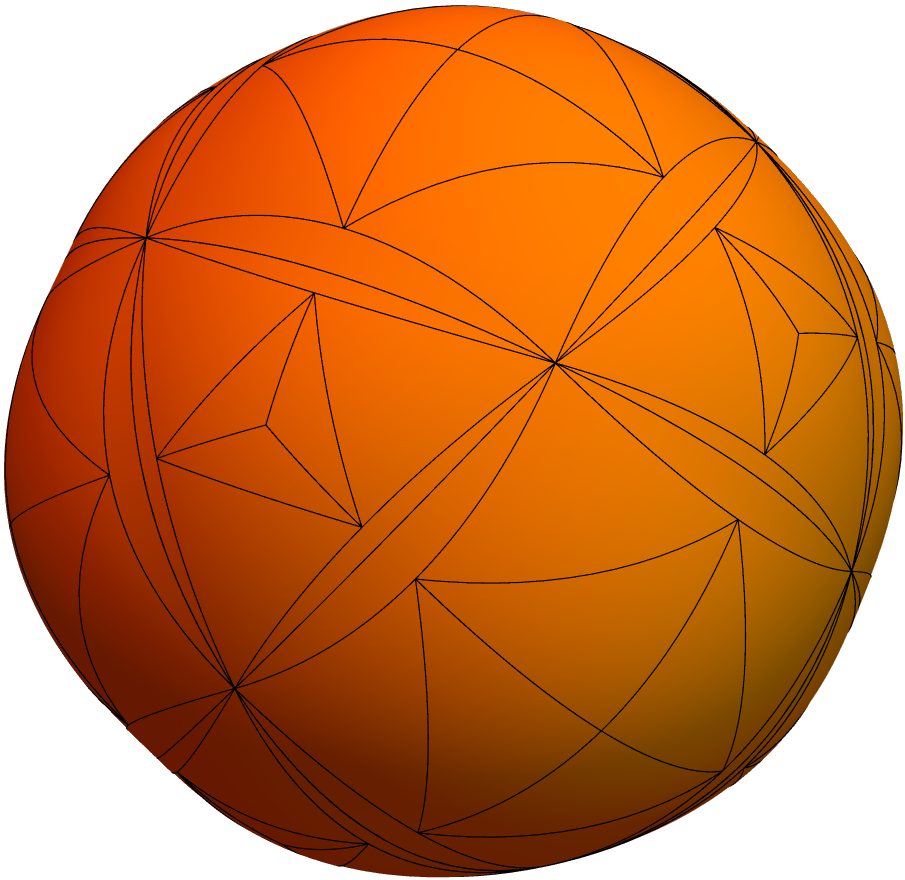}
\includegraphics[width=0.16\textwidth]{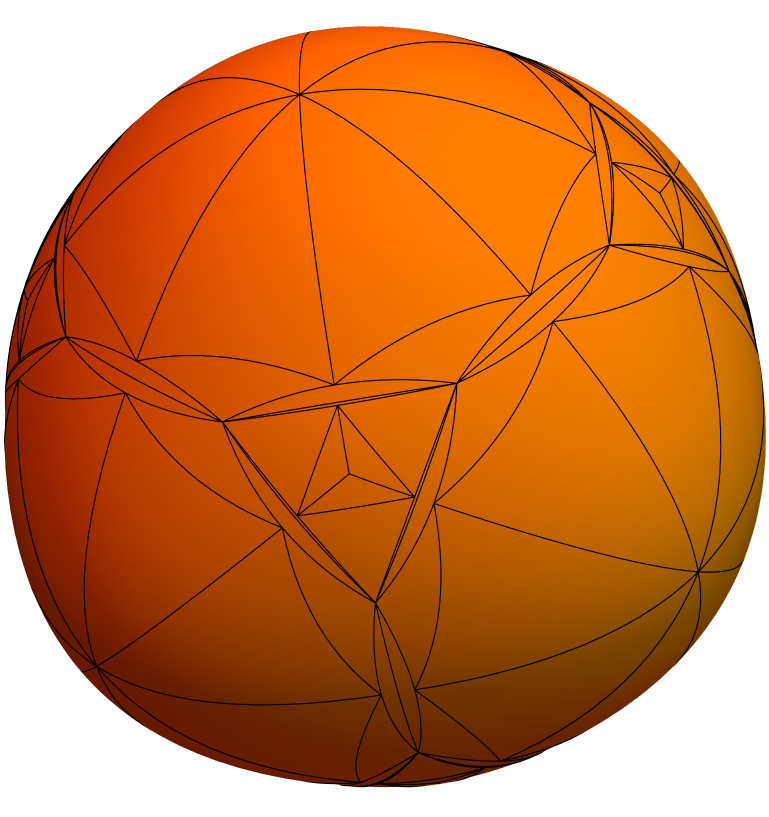}
\includegraphics[width=0.16\textwidth]{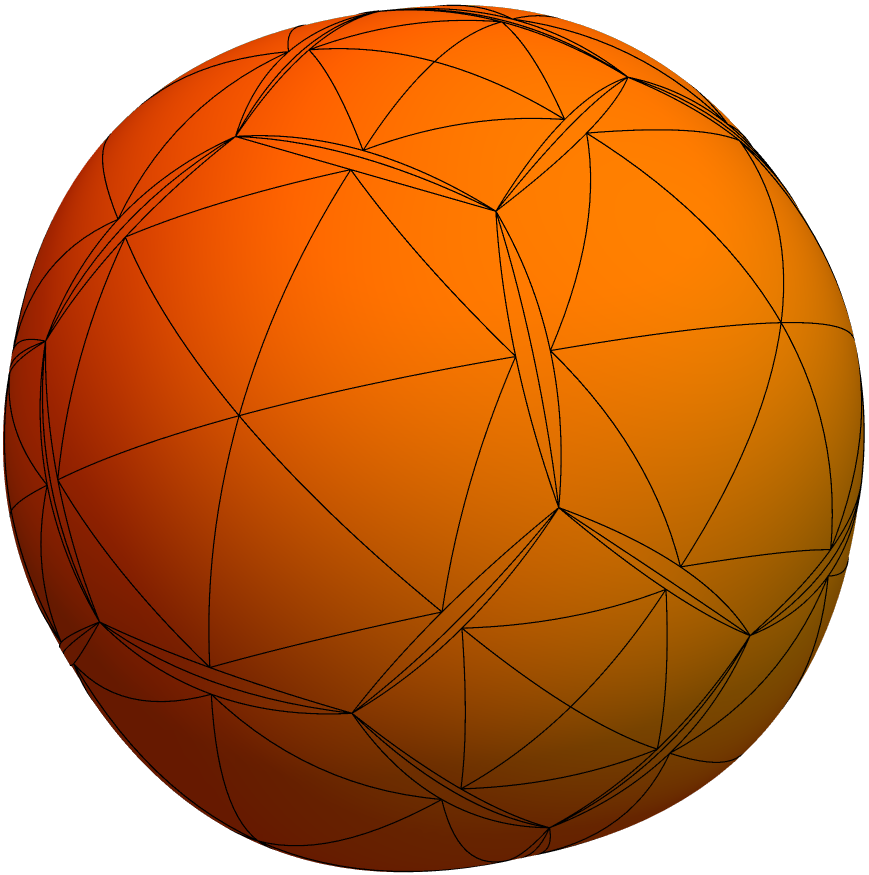}
\includegraphics[width=0.16\textwidth]{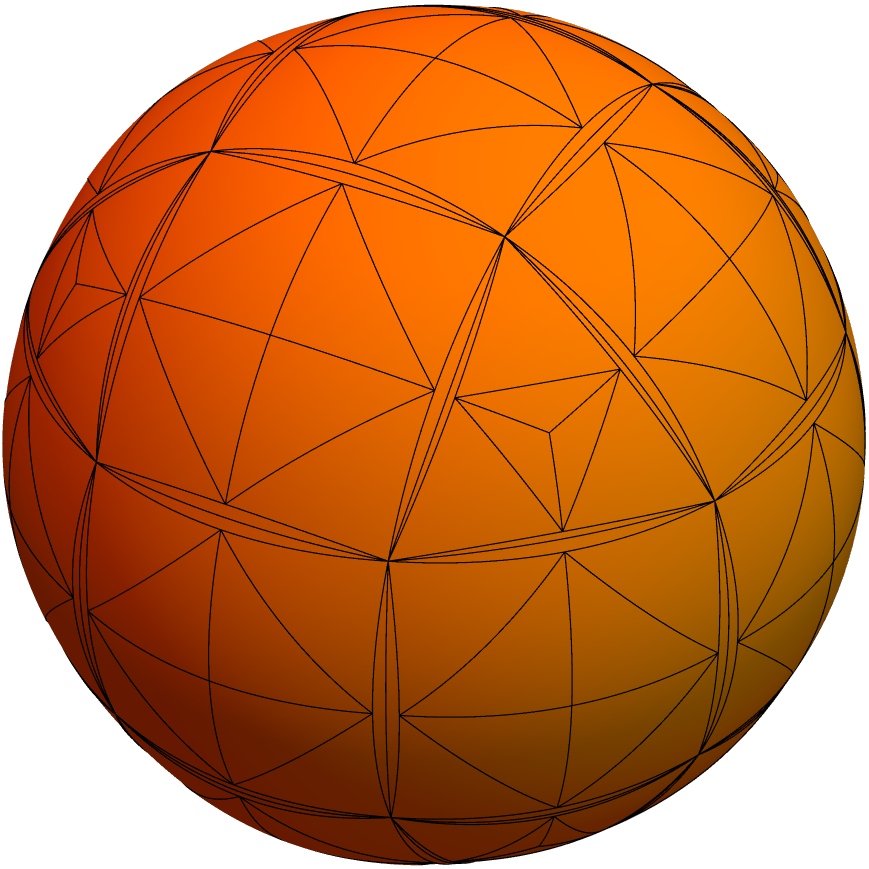}
\includegraphics[width=0.16\textwidth]{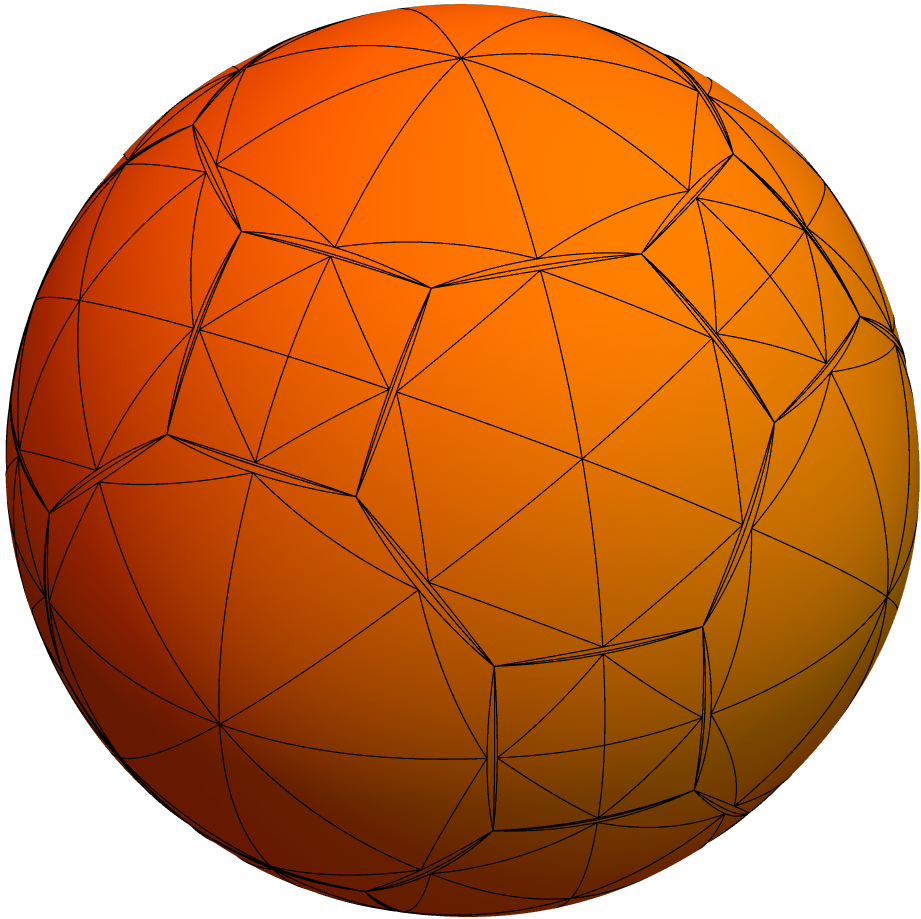}
\includegraphics[width=0.16\textwidth]{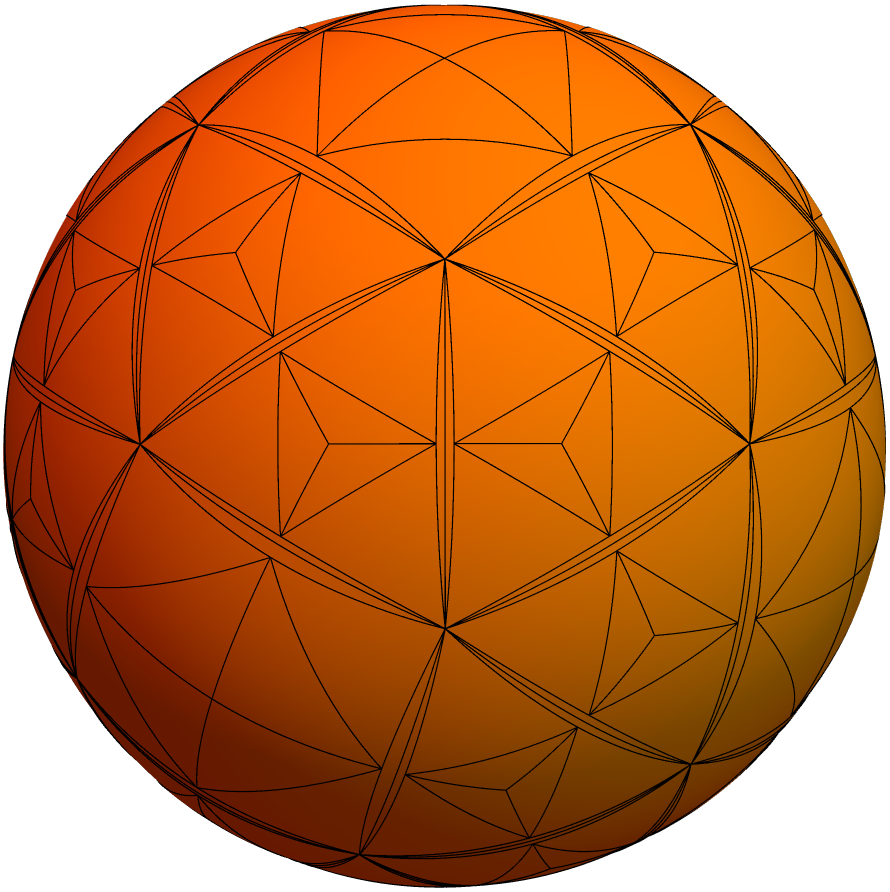}
\includegraphics[width=0.16\textwidth]{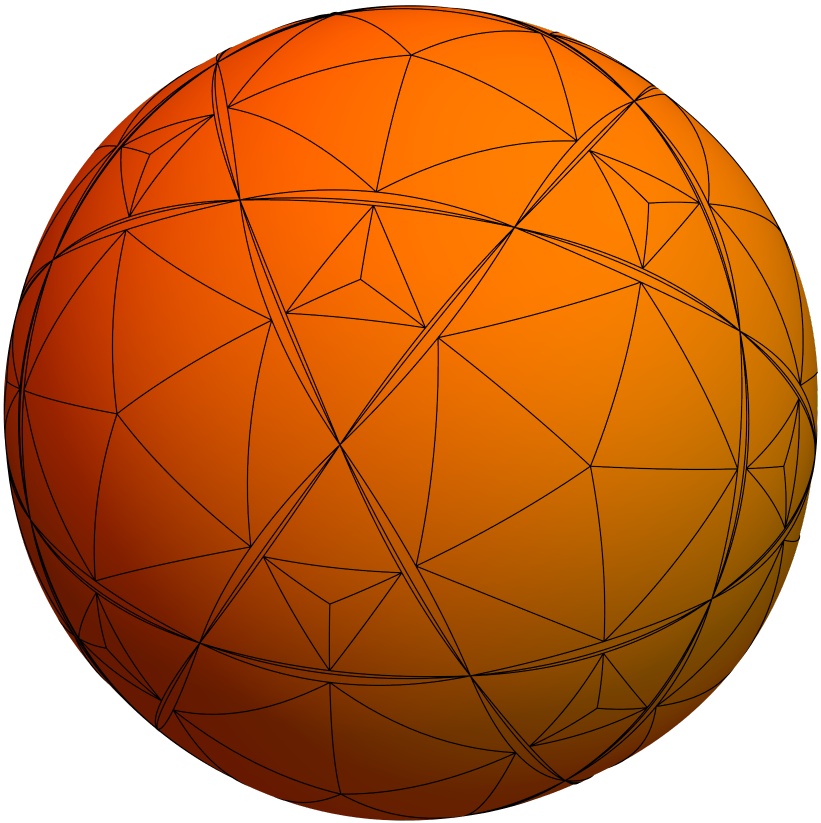}
\includegraphics[width=0.16\textwidth]{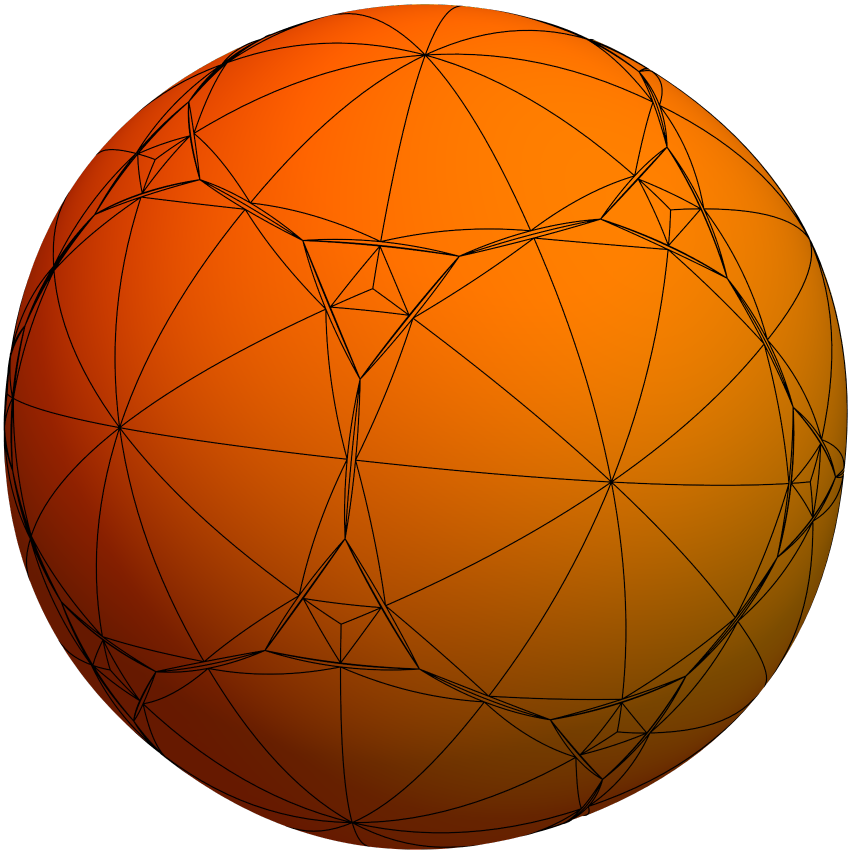}
\includegraphics[width=0.16\textwidth]{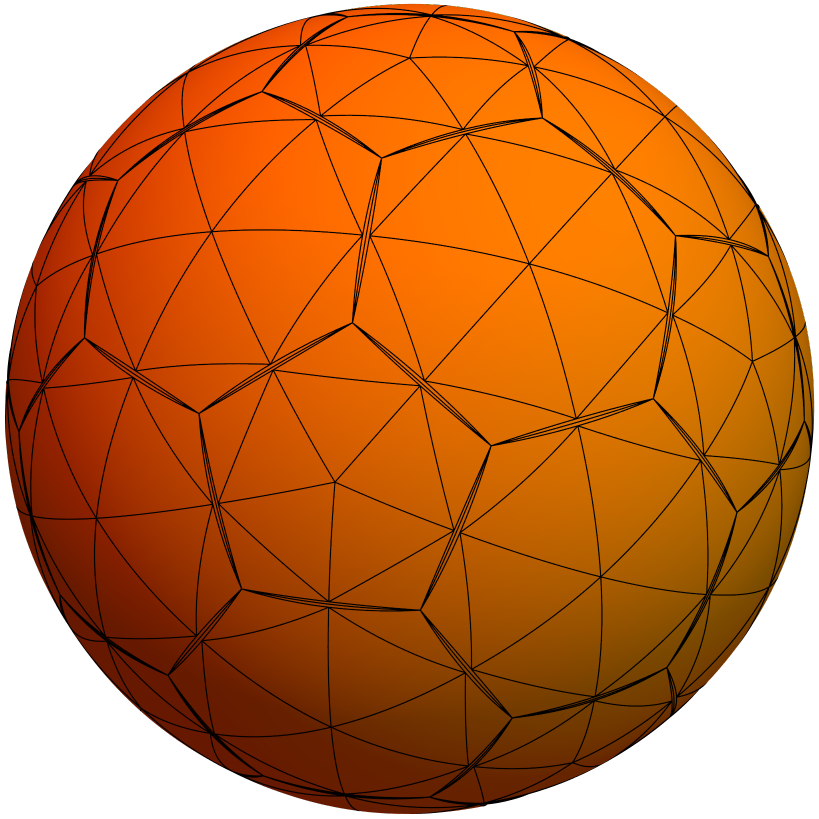}
\includegraphics[width=0.16\textwidth]{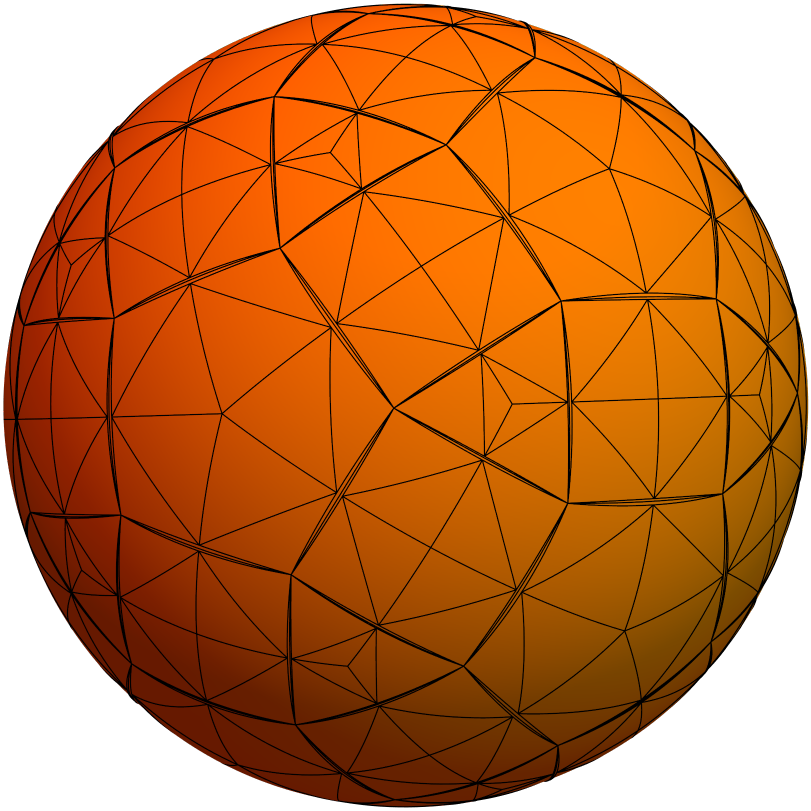}
\includegraphics[width=0.16\textwidth]{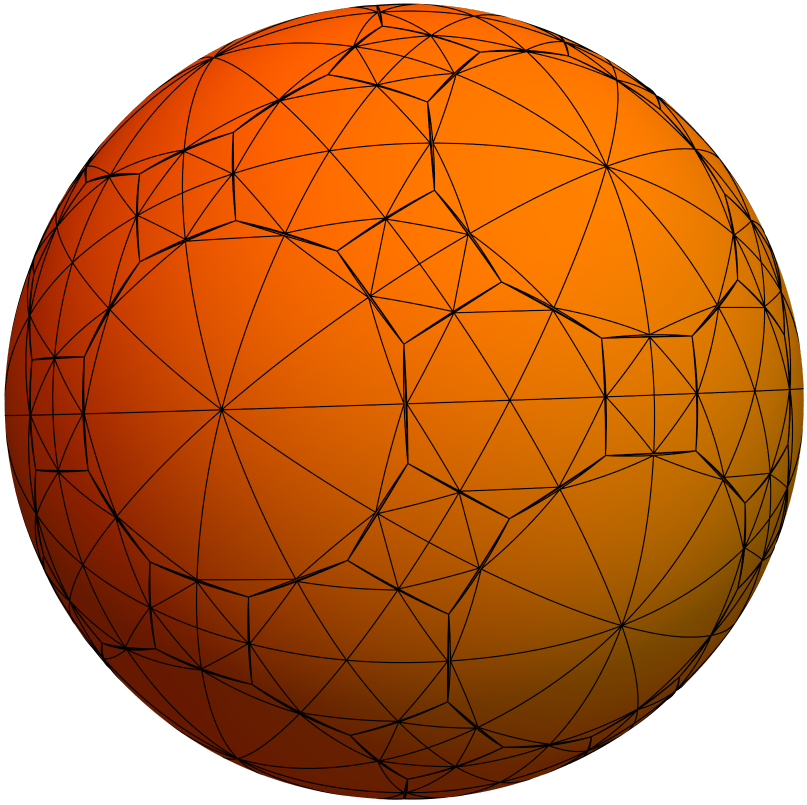}
\includegraphics[width=0.16\textwidth]{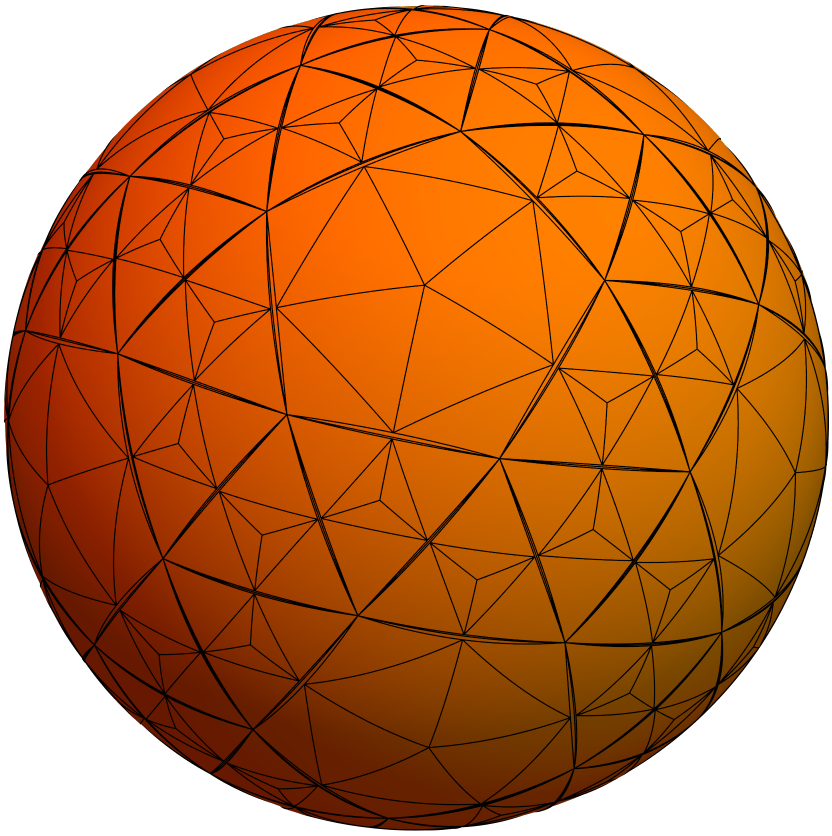}
\caption{\(G^1\) approximants corresponding to the five Platonic solids—the tetrahedron, cube, octahedron, dodecahedron, and icosahedron—and the thirteen Archimedean solids—the truncated tetrahedron, cuboctahedron, truncated cube, truncated octahedron, rhombicuboctahedron, truncated cuboctahedron, snub cube, icosidodecahedron, truncated dodecahedron, truncated icosahedron, rhombicosidodecahedron, truncated icosidodecahedron, and snub dodecahedron—for $x_1=c r$.}
\label{fig:UniformSolids}
\end{center}
\end{figure}

\begin{table}[h!]
    \centering
    \begin{tabular}{|l| c| c| |l| c| c|}
        \hline
        \textbf{Polyhedron} & $d_r$ &  $K \setminus \{0\}$ & \textbf{Polyhedron} & $d_r$ &  $K\setminus \{0\}$ \\
        \hline
        \hline
        Tetrahedron & $6{.}67 \cdot 10^{-1}$ & $ [0{.}11, 9{.}00]$ & 
        Rhombicuboctahedron & $1{.}09 \cdot 10^{-2}$ & $ [0{.}74, 1{.}34]$ \\
        Cube & $1{.}55\cdot 10^{-1}$ & $ [0{.}33, 3{.}00]$ & 
        Truncated cuboctahedron & $1{.}83 \cdot 10^{-2}$ & $ [0{.}68, 1{.}47]$ \\
        Octahedron & $1{.}55\cdot 10^{-1}$ & $[0{.}33, 3{.}00]$ & 
        Snub cube &$ 1{.}32\cdot 10^{-2}$ & $ [0{.}72, 1{.}38]$ \\
        Dodecahedron & $2{.}65\cdot 10^{-2}$ & $ [0{.}63, 1{.}58]$ & 
        Icosidodecahedron & $1{.}31\cdot 10^{-2}$ & $ [0{.}72, 1{.}38]$ \\
        Icosahedron & $2{.}65\cdot 10^{-2}$ & $ [0{.}63, 1{.}58 ]$& 
        Truncated dodecahedron & $1{.}56\cdot 10^{-2}$ & $ [0{.}70, 1{.}42]$ \\
        Truncated tetrahedron & $2{.}19\cdot 10^{-1}$ &$ [0{.}27, 3{.}67]$ & 
        Truncated icosahedron & $3{.}95\cdot 10^{-3}$ &$ [0{.}84, 1{.}19]$ \\
        Cuboctahedron & $2{.}06\cdot 10^{-2}$ & $ [0{.}50, 2{.}00]$ & 
        Rhombicosidodecahedron & $3{.}08\cdot 10^{-3}$ & $ [0{.}85, 1{.}17]$ \\
        Truncated cube & $7{.}61\cdot 10^{-2}$ & $ [0{.}46, 2{.}17]$ & 
        Truncated icosidodecahedron& $4{.}99\cdot 10^{-3}$ & $ [0{.}81, 1{.}22]$ \\
        Truncated octahedron & $3{.}28\cdot 10^{-2}$ & $ [0{.}60, 1{.}67]$ & 
        Snub dodecahedron & $3{.}58\cdot 10^{-3}$ & $ [0{.}84, 1{.}18]$ \\
        \hline
    \end{tabular}
    \caption{Limiting values of radial errors and Gaussian curvature ranges for the polyhedra as $x_1$ approaches $cr$. The surfaces are composed of patches constructed over regular polygons. In each patch, the segment corresponding to the triangle $\Delta_1$ has identically zero Gaussian curvature, while the table details the curvature ranges for the remaining segments of the patch.}
    \label{tab:errors}
\end{table}

There are also two infinite families of vertex-transitive polyhedra, namely the prisms and antiprisms, for which the same construction can be used as for the Archimedean solids. Although prisms and antiprisms with an \(n\)-gonal base have more faces as \(n\) increases, the resulting approximations become worse. This is due to the \(G^1\) continuity condition, which causes some patches to become increasingly elongated, while those corresponding to the lateral faces of the prism or antiprism become progressively smaller (see \Cref{fig:PrismSolids} and \Cref{fig:AntiPrismSolids}).
\begin{figure}[htb]
\begin{center}
\includegraphics[width=0.9\textwidth]{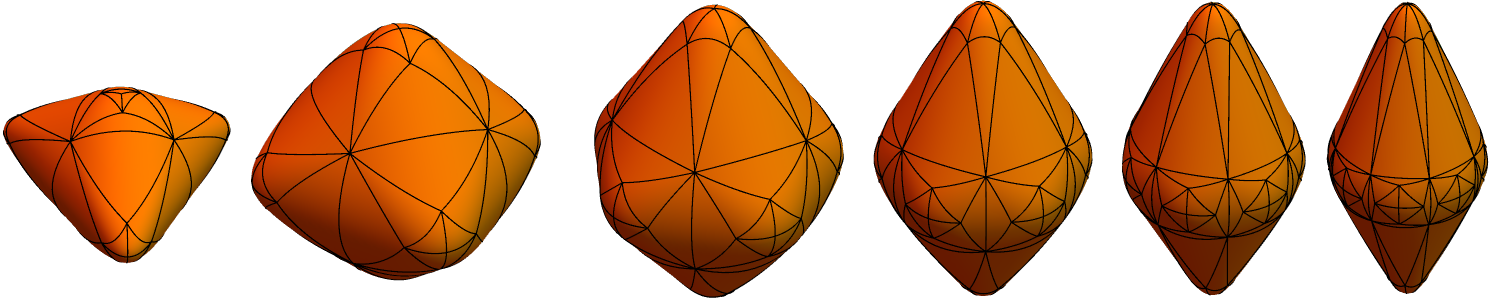}
\caption{\(G^1\) approximants induced by prisms, from the triangular prism to the octagonal prism.}
\label{fig:PrismSolids}
\end{center}
\end{figure}
\begin{figure}[htb]
\begin{center}
\includegraphics[width=0.9\textwidth]{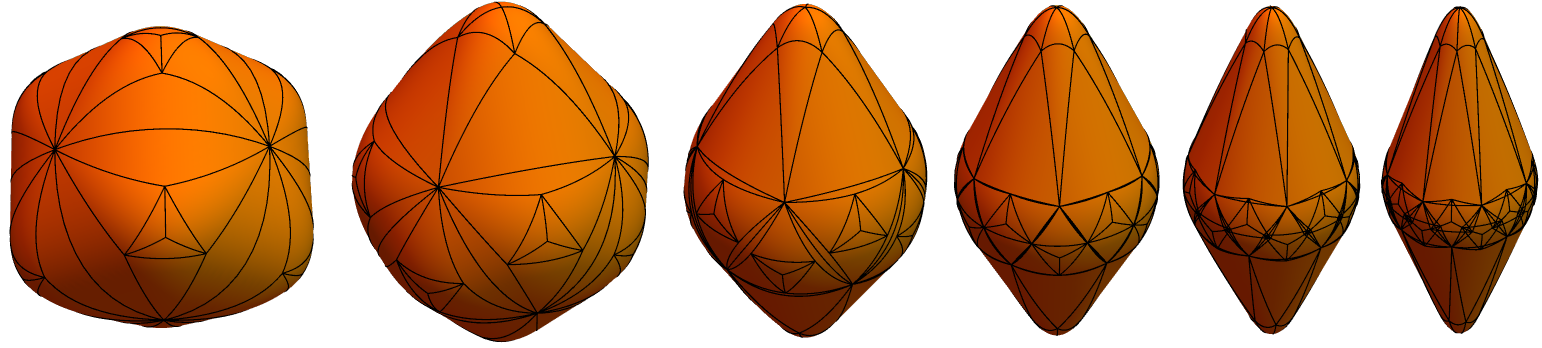}
\caption{\(G^1\) approximants induced by antiprisms, from the triangular antiprism to the octagonal antiprism.}
\label{fig:AntiPrismSolids}
\end{center}
\end{figure}

\section{Generalisation}\label{sec:Generalisation}

Using quadratic polynomial functions over triangular domains, we constructed a \(G^1\) spline that approximates the sphere. Although our construction is based on uniform polyhedra, the same approach can also be applied to certain Johnson solids.

The key requirements for the construction described in \Cref{sec:Triangulation} are that the faces of the polyhedron are regular polygons, that all vertices lie on a common sphere, and that the boundary curves constructed over the edges of the polyhedron lie in the planes determined by their endpoints and the coordinate origin. Furthermore, by construction, the normals to the tangent planes along the boundary curves also lie in the corresponding planes. The latter condition, together with \Cref{thm:characterisation}, ensures the \(G^1\) continuity of the spline.

Polyhedra satisfying these conditions are relatively rare. Moreover, if some faces have more vertices than others, the resulting spline tends to exhibit bumps above such faces, as observed in the case of prisms and antiprisms.

Among the Johnson solids, several candidates of interest include the pentagonal orthobirotunda, the elongated square gyrobicupola, and the metabigyrate rhombicosidodecahedron, which consist of triangles and pentagons, triangles and quadrilaterals, and triangles, quadrilaterals, and pentagons, respectively. The elongated square gyrobicupola is particularly close to the rhombicuboctahedron, differing only in that one cupola above the equatorial belt of squares is rotated by \( \tfrac{\pi}{4} \).

\section{Conclusion}\label{sec:Conclusion}

In surface modeling, it is essential to approximate shapes using objects that are as simple as possible while preserving a desired level of smoothness. Among such simple objects, quadratic polynomial triangular patches are of particular importance, and a central challenge is to join them in a way that achieves at least $G^1$ continuity.

In this paper, we presented a novel construction of a geometrically continuous quadratic spline over a suitable triangulation of an arbitrary uniform polyhedron. To the best of our knowledge, no such construction has previously appeared in the literature. The proposed approach yields a $G^1$ approximation of the sphere based entirely on quadratic triangular patches, thereby contributing a new construction to the theory of geometrically continuous splines.

Future work could focus on extending this construction to geometrically continuous quadratic splines over surfaces of higher genus. Another promising direction is to refine the construction so as to provide greater control over the curvature of the individual triangular patches, in particular to avoid the occurrence of parabolic patches.

\vskip3mm
{\noindent \sl Acknowledgments.}
The authors are grateful to Marjetka Knez and Emil \v{Z}agar for numerous fruitful discussions, valuable comments, and suggestions.
The first author was supported by the Slovenian Research and Innovation Agency program P1-0294. The second author was supported by the Slovenian Research and Innovation Agency program P1-0292.

\bibliographystyle{plain}
\bibliography{main}

\end{document}